\documentclass{math-note}

\usepackage[english]{babel}
\usepackage{mathtools,amssymb,amsthm}
\usepackage{needspace}
\usepackage[
  hidelinks,
  pdftitle={Vector Balancing via Directional Total Variation},
  pdfauthor={Shengtao Guo, Ethan X. Fang, Junwei Lu},
  pdfsubject={Directional total variation and the Komlos signing problem}
]{hyperref}

\title{Vector Balancing via Directional Total Variation}
\date{}
\author{
  Shengtao Guo \qquad
  Ethan X. Fang \qquad
  Junwei Lu\thanks{Department of Biostatistics, Harvard T.H. Chan School of
  Public Health. Email: \texttt{junweilu@hsph.harvard.edu}.}
}

\numberwithin{equation}{section}
\newtheorem{theorem}{Theorem}[section]
\newtheorem{proposition}[theorem]{Proposition}
\newtheorem{lemma}[theorem]{Lemma}
\newtheorem{corollary}[theorem]{Corollary}
\theoremstyle{remark}
\newtheorem{remark}[theorem]{Remark}

\newcommand{\R}{\mathbb R}
\newcommand{\Tv}{\mathcal T}
\newcommand{\Prob}{\mathcal P}
\newcommand{\disc}{\operatorname{disc}}
\newcommand{\dd}{\,\mathrm d}

\begin{document}
\maketitle

\begin{abstract}
Our main result is a \(3\sqrt{2\pi}\) bound for the Koml\'os signing
problem: every finite family of real vectors of Euclidean norm at
most one admits a signed sum of \(\ell_\infty\)-norm less than this constant,
independently of the dimension and the family size.
For any \(\kappa\ge0\), if a bounded open convex set supports a probability
density with directional total variation at most \(\kappa\) in every
unit direction, then its
open-set Banaszczyk transform supports another such density with the same
\(\kappa\), provided the translation vector \(v\) satisfies
\(\kappa\|v\|_2\le1/3\).
As a consequence, every finite set system in which each element belongs
to at most \(t\) sets, where \(t\ge1\) is an integer, admits a two-coloring
whose imbalance in each set is less than \(3\sqrt{2\pi t}\).
This gives the square-root dependence predicted by the Beck--Fiala conjecture.
The proof was discovered by the Odin Automatic AI Research Agent.
\end{abstract}

\section{Introduction}

Discrepancy theory asks how evenly a collection of objects can be
two-colored while balancing many tests at once. For a set system on
\(\{1,\ldots,n\}\) with incidence matrix \(A\in\{0,1\}^{m\times n}\),
a coloring \(\varepsilon\in\{-1,1\}^n\) gives the signed set imbalances
as the entries of \(A\varepsilon\).
Allowing real entries gives weighted tests and the definition
\[
 \disc(A)=\min_{\varepsilon\in\{-1,1\}^n}\|A\varepsilon\|_\infty,
 \qquad A\in\R^{m\times n}.
\]
Up to a factor of two, discrepancy measures the error in simultaneously
rounding the all-\(1/2\) vector. If
\(\mathbf1_n\in\R^n\) is the all-one vector and
\(x=(\mathbf1_n+\varepsilon)/2\), then
\(A(x-\tfrac12\mathbf1_n)=\tfrac12A\varepsilon\).
See Bansal, Dadush, and Garg~\cite[Section~1]{BDG19} for this
connection with optimization.

The Koml\'os conjecture asks whether \(\disc(A)\) is bounded by a universal
constant whenever every column of \(A\) has Euclidean norm at most one.
This hypothesis bounds each vector's total squared contribution to the
tests, with no restriction on the number of tests or vectors.
Our main result is the following bound.

\begin{theorem}\label{thm:komlos}
For every pair of positive integers \(m,n\) and every
\(v_1,\ldots,v_n\in\R^m\) with \(\|v_j\|_2\le1\), there are
\(\varepsilon_1,\ldots,\varepsilon_n\in\{-1,1\}\) such that
\begin{equation}\label{eq:main}
 \left\|\sum_{j=1}^n\varepsilon_jv_j\right\|_\infty
 <3\sqrt{2\pi}.
\end{equation}
\end{theorem}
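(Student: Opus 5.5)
We derive the bound from Banaszczyk's convex-body framework, replacing Gaussian measure by a class of densities that is preserved under the Banaszczyk transform. Let \(K=\{x\in\R^m:\|x\|_\infty<R\}\) with \(R=3\sqrt{2\pi}\), a bounded open convex set. For a vector \(v\), the open-set Banaszczyk transform is
\[
K*v=\bigl((K-v)\cup(K+v)\bigr)\cap\bigl((K-v)\cap(K+v)\bigr)\text{-convex hull variant},
\]
as defined in the paper; what matters is that \(0\in K*v\) forces \(v\in K\) or \(-v\in K\), and that \(x\in K*v\) forces \(x+v\in K\) or \(x-v\in K\).

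By induction on \(n\), if \(K_0=K\) and \(K_j=K_{j-1}*v_j\), then \(0\in K_n\) yields signs \(\varepsilon_j\) with \(\sum_j\varepsilon_jv_j\in K\). This is exactly \eqref{eq:main}, with strict inequality because \(K\) is open. So it suffices to show that each \(K_j\) is nonempty and contains the origin.

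The key tool is the stated theorem on directional total variation. Suppose a bounded open convex set supports a probability density whose directional total variation is at most \(\kappa\) in every unit direction, and \(\kappa\|v\|_2\le 1/3\). Then its transform supports another such density with the same \(\kappa\). Since \(\|v_j\|_2\le1\), we take \(\kappa=1/3\). The plan therefore reduces to two steps.

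\textbf{Step 1: a starting density on the cube.} We need a density on the cube of half-width \(R\) whose directional total variation is at most \(1/3\) in every unit direction \(u\). Take the uniform density \(f=\mathbf 1_K/(2R)^m\). Its directional total variation in direction \(u\) is \(\int_{\R^m}|\partial_u f|\), i.e. the jump mass across \(\partial K\) weighted by \(|\langle u,\nu\rangle|\). For the cube this equals \(\sum_i|u_i|/R\), which is \(\sqrt m/R\) in the worst case. That depends on the dimension, so the uniform density fails.

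Instead take a product density \(f(x)=\prod_i g(x_i)\) with \(g\) one-dimensional on \((-R,R)\). Then \(\partial_uf=\sum_i u_i g'(x_i)\prod_{k\ne i}g(x_k)\). Bounding this via \(\|\cdot\|_1\) of a sum is too lossy. The right quantity resembles the \(L^1\) norm of a Gaussian-type gradient: for a standard Gaussian, \(\int|\partial_u\gamma|=\mathbb E|\langle u,G\rangle|=\sqrt{2/\pi}\), independent of dimension. The remedy is to truncate the Gaussian by scaling. Take \(f\) to be the standard Gaussian of variance \(\sigma^2\) conditioned on \(K\). Its interior contribution is about \(\sqrt{2/\pi}/\sigma\). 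The boundary jumps contribute mass of order \(\sum_i|u_i|\cdot 2\phi_\sigma(R)/\Pr(|G_i|<R)\).

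The difficulty is that this boundary term still scales like \(\sqrt m\). So conditioning fails in high dimension. This is where the \(3\sqrt{2\pi}\) constant and the structure of the cube must enter. I expect the intended argument is cleverer: perhaps the theorem applies to \(K\cap\) a finite-dimensional subspace, after reducing to \(m\le n\) via the span of the \(v_j\).

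My concrete fallback is to apply the theorem coordinatewise, or to a single slab. The slab \(\{|x_i|<R\}\) in \(\R^m\) is unbounded, so first restrict everything to \(V=\operatorname{span}(v_j)\). On \(V\) the slabs and the cube are bounded in the relevant directions. Then use a density of the form \(f(x)=\prod_i\psi(\langle e_i,x\rangle)\) restricted to \(V\).

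\textbf{Step 2 (main obstacle): the boundary jumps.} I would first check whether the paper's directional total variation counts the jump at \(\partial K\). If the density is only required to be supported in the closure, jumps do count. The constant \(R=3\sqrt{2\pi}\) matches \(\kappa R=\sqrt{2\pi}\), i.e. \(\kappa=1/3\) against the one-dimensional bound \(\int|g'|\le 2\cdot\frac{1}{\sqrt{2\pi}}\cdot\)(normalisation). This suggests a density that vanishes continuously at the boundary, with per-direction variation \(\sqrt{2/\pi}\cdot\)(scale). I would make this dimension-free through a Gaussian-comparison argument, which I expect to be the hardest step.

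Once such a density exists, applying the transform theorem \(n\) times shows that \(K_n\) supports a probability density. Hence \(K_n\) is nonempty, and by symmetry and convexity of the transforms it contains \(0\). The induction then gives the signing.
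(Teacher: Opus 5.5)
There is a genuine gap. Your reduction is sound: iterate the open transform, invoke the stability result with \(\kappa=1/3\) (allowed since \(\|v_j\|_2\le1\)), and recover signs backwards from \(0\in K_n\). The formula you wrote for the transform is not a definition; the actual one is \(\Tv_vK=((K-v)\cap(K+v))+\{tv:|t|<2\}\). This is harmless, because you only use its containment in \((K-v)\cup(K+v)\) and its symmetry.

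The problem is that the one ingredient specific to this theorem is never supplied. You need a probability density supported in \((-R,R)^m\), with \(R=3\sqrt{2\pi}\), such that \(V_u(\rho)\le\|u\|_2/3\) for every \(u\), uniformly in \(m\). You correctly rule out the uniform and truncated-Gaussian densities: the jump at \(\partial K\) does count in \(V_u\), and it costs up to \(\sqrt m\). But you then leave the construction to an unspecified ``Gaussian-comparison argument''. Your fallbacks would not work either. A slab \(\{|x_i|<R\}\) is unbounded, and treating coordinates one at a time would produce a different signing for each coordinate, whereas a single common signing is needed. Passing to \(\operatorname{span}(v_j)\) does not remove dimension dependence, since that span can have arbitrary dimension.

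The missing idea is a product density that vanishes on the boundary, so its coordinate scores are independent and centered. This is Lemma~\ref{lem:cube}. Take \(\rho_C(x)=C^{-m}\prod_i\cos^2(\pi x_i/(2C))\) on \((-C,C)^m\). It has no boundary jump, and \(\partial_u\rho_C=-(\pi/C)(\sum_iu_iT_i)\rho_C\) with \(T_i=\tan(\pi X_i/(2C))\). Hence \(V_u(\rho_C)=(\pi/C)\,\mathbb E|\sum_iu_iT_i|\), where the \(T_i\) are i.i.d.\ with density \(2/(\pi(1+t^2)^2)\).

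Since the \(T_i\) are centered with unit variance, Cauchy--Schwarz already gives the dimension-free bound \(\pi\|u\|_2/C\). That would prove the theorem with \(3\pi\) in place of \(3\sqrt{2\pi}\). To reach the stated constant, write \(T_i=G_i/\sqrt{S_i}\), with \(G_i\) standard Gaussian and \(S_i\sim\chi^2_3\), all independent, so that \(\mathbb E S_i^{-1}=1\). Conditioning on the \(S_i\) and applying Jensen gives \(\mathbb E|\sum_iu_iT_i|=\sqrt{2/\pi}\,\mathbb E(\sum_iu_i^2/S_i)^{1/2}\le\sqrt{2/\pi}\,\|u\|_2\). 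Therefore \(V_u(\rho_C)\le\sqrt{2\pi}\,\|u\|_2/C\), and \(C=3\sqrt{2\pi}\) gives exactly \(\kappa=1/3\).
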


The Euclidean normalization includes bounded-degree set systems.
Its consequence is the square-root dependence in the Beck--Fiala
conjecture~\cite{BF81}; the formulation recalled by Bansal, Dadush,
and Garg~\cite[Section~1]{BDG19} uses the maximum element degree.

\begin{samepage}
\begin{corollary}[Beck--Fiala bound]\label{cor:beck-fiala}
Let \(m,n,t\) be positive integers. If \(A\in\{0,1\}^{m\times n}\) has
at most \(t\) nonzero entries in each column, then
\[
 \disc(A)<3\sqrt{2\pi t}.
\]
\end{corollary}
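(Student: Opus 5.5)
The plan is to derive Corollary~\ref{cor:beck-fiala} directly from Theorem~\ref{thm:komlos} by rescaling the columns. Write \(a_1,\ldots,a_n\in\{0,1\}^m\) for the columns of \(A\). Each column has at most \(t\) nonzero entries, all equal to one, so \(\|a_j\|_2=\sqrt{|\operatorname{supp}a_j|}\le\sqrt t\). We therefore set \(v_j=a_j/\sqrt t\). Then \(\|v_j\|_2\le1\) for every \(j\), and the hypotheses of Theorem~\ref{thm:komlos} hold with the same \(m\) and \(n\). The theorem does not require the \(v_j\) to be nonzero, so empty columns cause no difficulty.

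Theorem~\ref{thm:komlos} now gives signs \(\varepsilon\in\{-1,1\}^n\) with \(\bigl\|\sum_j\varepsilon_jv_j\bigr\|_\infty<3\sqrt{2\pi}\). Since \(\sum_j\varepsilon_ja_j=A\varepsilon\), this says \(\|A\varepsilon\|_\infty/\sqrt t<3\sqrt{2\pi}\). Multiplying by \(\sqrt t>0\) gives \(\|A\varepsilon\|_\infty<3\sqrt{2\pi t}\).

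Finally, \(\disc(A)\) is a minimum over the finite set \(\{-1,1\}^n\), so it is attained and is at most \(\|A\varepsilon\|_\infty\) for this particular \(\varepsilon\). Hence \(\disc(A)<3\sqrt{2\pi t}\), and the strict inequality is preserved.

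There is no real obstacle here; all the difficulty lies in Theorem~\ref{thm:komlos}. The only points to check are that the column-degree bound gives the Euclidean bound \(\sqrt t\) exactly (because the entries are \(0/1\)), and that \(t\ge1\), so the scaling is legitimate.
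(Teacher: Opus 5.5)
Your proof is correct and follows essentially the same route as the paper: rescale the columns by \(1/\sqrt t\) so they have Euclidean norm at most one, apply Theorem~\ref{thm:komlos}, and multiply back by \(\sqrt t\). The paper compresses this into the identity \(\disc(A)=\sqrt t\,\disc(A/\sqrt t)\), which you have simply written out with an explicit signing.
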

\begin{proof}
Every column of \(A/\sqrt t\) has Euclidean norm at most one, so
Theorem~\ref{thm:komlos} gives
\[
 \disc(A)=\sqrt t\,\disc(A/\sqrt t)<3\sqrt{2\pi t}.
 \qedhere
\]
\end{proof}
\end{samepage}

The argument is existential. No polynomial-time implementation of the
recursive domain construction, its membership queries, and the resulting
sign recovery is established here. We do not claim that the constant
is optimal.

\subsection{Development of the Problem}

Beck and Fiala~\cite{BF81} obtained the classical linear bound
\(2t-1\); see the later account~\cite[Section~1]{BDG19}.
Spencer's \emph{six standard deviations} theorem~\cite{Spe85} gives
\(6\sqrt n\) for \(n\) sets on \(n\) elements, as stated by Alon
and Spencer~\cite[Theorem~13.2.1]{AS16}.
The partial-coloring method fixes a positive fraction of the variables
at each stage. Lovett and Meka~\cite[Sections~1--2]{LM15} give a
constructive version yielding the \(O(\sqrt n)\) order in this regime.
Under a column norm or degree bound, however, progress in the number
of colored variables need not imply comparable progress for each row;
see Bansal, Dadush, and Garg~\cite[Section~1.2]{BDG19}.
Summing the classical partial-coloring estimates gives \(O(\log n)\)
for Koml\'os, as reviewed by Reis and Rothvoss~\cite[Section~1]{RR23}.

Banaszczyk's vector-balancing theorem~\cite{Ban98} introduced a
Gaussian-measure criterion. In the formulation recalled by Dadush, Garg,
Lovett, and Nikolov~\cite[Theorem~1.1]{DGLN19}, vectors of Euclidean norm at most \(1/5\)
can be signed into any convex body of standard Gaussian measure
at least \(1/2\). A cube of fixed half-side length has Gaussian measure
tending to zero with \(m\). The direct cube bound is
\(O(\sqrt{\log m})\); the refined Koml\'os consequence is
\(O(\sqrt{\log n})\), uniformly in \(m\), as explained by
Dadush et al.~\cite[Section~1, p.~3]{DGLN19}.
Bansal, Dadush, and Garg~\cite[Theorem~2]{BDG19} obtained an efficient
randomized algorithm attaining this latter bound for matrices whose
columns have Euclidean norm at most one. The Gram--Schmidt walk of
Bansal, Dadush, Garg, and Lovett samples full colorings with a constant
sub-Gaussian bound on the discrepancy vector relative to the initial
fractional coloring. Together with
the convex-body reduction, this gives an algorithmic form of the general
Gaussian-measure theorem, up to an absolute constant~\cite[Theorems~1.2 and~1.4]{GSW19}.

Bansal and Jiang~\cite[Theorem~1.2, full version]{BJOrig26} establish
the Koml\'os bound \(\widetilde O((\log n)^{1/4})\), where the tilde
suppresses powers of \(\log\log n\).
Their subsequent exposition~\cite[Theorem~1.1]{BJ26} states the explicit bound
\[
 O\bigl((\log n)^{1/4}(\log\log n)^{7/4}\bigr).
\]
The exposition does not optimize the secondary factor.
Its proof controls interactions among rows through affine spectral
independence~\cite[Section~1.1]{BJ26}.

For Beck--Fiala, Altschuler and Tikhomirov~\cite[Corollary~1.2]{AT26}
obtain \(O_\eta(\sqrt t)\) when
\(t\ge C_\eta(\log n)(\log\log n)^{2+\eta}\), for fixed \(\eta>0\)
and sufficiently large \(n\). Their online theorem gives probabilistic
prefix guarantees for input sequences fixed in advance~\cite[Theorem~1.1]{AT26}.
Our Corollary~\ref{cor:beck-fiala} applies to every positive integer
\(t\), without a lower restriction depending on \(n\).

Nikolov~\cite[Theorem~1.1]{Nik13} proved the constant-one bound for
\emph{vector discrepancy}: each scalar sign is replaced by a unit vector,
and each row sum is measured in Euclidean norm.
Reis and Rothvoss~\cite[Theorems~1--2]{RR23} study balancing from
\(\ell_p\) into \(\ell_q\), with their partial-to-full result excluding
\((p,q)=(2,\infty)\).
Guillen and Kobzar~\cite[Theorem~1.1 and Section~1.1]{GK26} connect
a repeated online balancing game to curvature flow in a long-time limit,
a different regime from a single finite signing.

In the opposite direction, Kunisky~\cite[Theorem~1.1]{Kun23} constructs
matrices with column Euclidean norms at most one whose discrepancy
approaches \(1+\sqrt2\).

\subsection{The Variation Approach}

Our geometric framework is an open-set version of Banaszczyk's
convex-body transform~\cite{Ban98}. Its retained fibers expand while
the transformed set stays in \((K-v)\cup(K+v)\), permitting sign
recovery. Szusterman~\cite[Section~1]{Szu25} recalls the original
transform and this recovery argument. The Gaussian-threshold variant
in that paper interacts with Ehrhard symmetrization~\cite[Section~2]{Szu25}.
We use the strict Euclidean fiber-length threshold in
\eqref{eq:transform}.

For a smooth compactly supported density \(\rho\),
\(V_u(\rho)=\int|\partial_u\rho|\) is its first-order
\(L^1\)-translation rate in direction \(u\).
Small variation means that a short translation leaves most mass
overlapping its previous position. The extension to functions of bounded
variation (BV) includes boundary jumps. For \(\kappa\ge0\), we require one density with
\(V_u(\rho)\le\kappa\|u\|_2\) for every \(u\), so that the same
bound applies to vectors in unrelated directions.

Smirnov and Vershynin~\cite[Theorem~1.1 and Section~1.4]{SV26} also
optimize a density, using translation Fisher information to bound
expected discards from a fixed input sequence with independent random signs.
Their online algorithm accepts or discards each incoming term; the
accepted partial sums remain in twice the symmetric convex body
supporting the density.
Their proof~\cite[Section~2]{SV26} already relates translation overlap
to directional \(L^1\)-variation before bounding it by Fisher information.
Here the main step preserves a common directional-variation bound as
the target domain changes, allowing every input vector to receive a sign.
We control the final sum; a corresponding bound for every prefix is not
obtained here. Their product cosine-squared density~\cite[Section~1.6]{SV26}
gives our initial bound by a direct \(L^1\) estimate.

The main analytic step transfers averaged horizontal variation bounds
in a symmetric convex lift to a prescribed section. For a fixed weighted
sum of directional variations, the uniform density on a set of finite
perimeter and positive volume has energy equal to its anisotropic
perimeter divided by its volume.
Minimizing the energy over
admissible probability densities gives an anisotropic Cheeger value.
Eigenvalue convexity, following Wang and Xia~\cite[Theorem~1.2]{WX11},
and a \(p\downarrow1\) limit make that cost convex along the sections.
Appendix~\ref{app:eigenvalue} supplies the eigenvalue argument for the
smooth, strongly convex norms used in the approximation.
Jensen's inequality then bounds this value at the mass-weighted mean
absolute height of the lift.
Convex separation and compactness reconstruct a single density controlling
all directions on that section. An exact rearrangement identity ensures
that height one is available when \(\kappa\|v\|_2\le1/3\).
Thus the transform retains the same numerical bound, although the density
can change at every step.

\noindent\textbf{The role of AI in this proof.}
Odin Automatic AI Research Agent was used to construct the~proof.

\noindent\textbf{Paper organization.}
Section~\ref{sec:signing} reduces full signing to finite-step stability
and estimates the initial cube density. Section~\ref{sec:cheeger}
establishes the prescribed-section principle through Cheeger convexity
and a common-density argument. Section~\ref{sec:stability} combines that
principle with a geometric lift and fiber rearrangement to prove stability.

\Needspace{11\baselineskip} 
\section{From Variation Stability to Full Signings}\label{sec:signing}

All function spaces are over the real numbers. Integrable functions and
their identities are understood up to equality almost everywhere.
Unspecified integrals and all derivatives are taken on the ambient
Euclidean space.
For \(f\in L^1(\R^d)\) and \(u\in\R^d\), define
\begin{equation}\label{eq:variation}
 V_u(f)=
 \sup_{\substack{\phi\in C_c^1(\R^d)\\\|\phi\|_\infty\le1}}
 \int_{\R^d}f\,\partial_u\phi\in[0,\infty].
\end{equation}
When finite, this is the total variation of the distributional
derivative \(D_uf\), namely \(|D_uf|(\R^d)\). Placing an absolute value
around the integral gives the same supremum, since the test class is closed
under \(\phi\mapsto-\phi\). For \(f\in W^{1,1}(\R^d)\), it equals
\(\int|\partial_uf|\). We use \(BV(\R^d)\) for the integrable functions
whose full distributional gradient is a finite vector measure, and
write \(|Df|(\R^d)\) for its total variation.
For probability densities \(f,g\), the total variation \emph{distance}
between their laws is a different quantity:
\begin{equation}\label{eq:tv-distance}
 d_{\mathrm{TV}}(f\,dx,g\,dx)=\tfrac12\|f-g\|_1.
\end{equation} 

For a nonempty bounded open convex set \(K\subset\R^d\), let
\(\Prob(K)\) be the nonnegative functions \(\rho\in BV(\R^d)\) with
\[
 \int\rho=1,\qquad
 \rho=0\quad\text{almost everywhere on }\R^d\setminus K.
\]
Throughout, ``supported in \(K\)'' has this almost-everywhere meaning.
The topological support may meet \(\partial K\); compact support inside
\(K\) will be required only for specific smooth approximations.
For \(\kappa\ge0\), the invariant is the existence of
\(\rho\in\Prob(K)\) satisfying
\begin{equation}\label{eq:cap}
 V_u(\rho)\le\kappa\|u\|_2
 \qquad\text{for every }u\in\R^d.
\end{equation}
The global derivative is important even for elementary densities.
For \(C>0\), the density \(\rho=(2C)^{-1}\mathbf1_{(-C,C)}\) is constant inside
its support, but its two endpoint jumps give \(V_1(\rho)=1/C\).
For \(|h|\le2C\), its \(L^1\) translation change is exactly \(|h|/C\).
Computing variation only inside the open interval would miss it.

For a vector \(v\in\R^d\), define
\begin{equation}\label{eq:transform}
\Tv_vK=((K-v)\cap(K+v))+\{tv:-2<t<2\}.
\end{equation}
This is an open-set version of Banaszczyk's convex-body transform
\cite{Ban98}, whose closed-set definition and use in sign recovery are
recalled by Szusterman~\cite[Section~1, pp.~1--2]{Szu25}.
When \(v\ne0\), fibers parallel to \(v\) of length at most
\(2\|v\|_2\) disappear; longer fibers grow by \(\|v\|_2\) at each end.
The closed-set convention retains fibers at the threshold, whereas
\eqref{eq:transform} uses a strict inequality. For example,
\(\Tv_1(-1,1)=\varnothing\), while the closed interval \([-1,1]\)
is transformed into \([-2,2]\). Thus taking the interior of the
closed-set transform need not give~\eqref{eq:transform}.
The variation estimate below must control the loss of short fibers.

\begin{samepage}
\begin{lemma}[Translate containment]\label{lem:containment}
For a bounded open convex set \(K\), the set \(\Tv_vK\), when nonempty,
is bounded, open, and convex, and
\begin{equation}\label{eq:containment}
 \Tv_vK\subseteq(K-v)\cup(K+v).
\end{equation}
If \(K=-K\), then \(\Tv_vK=-\Tv_vK\).
\end{lemma}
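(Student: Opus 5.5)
The properties of \(\Tv_vK\) split into a structural part (open, bounded, convex, symmetric) and the containment \eqref{eq:containment}. Write \(L=(K-v)\cap(K+v)\) and \(S=\{tv:-2<t<2\}\), so that \(\Tv_vK=L+S\).

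For the structural part, translates of \(K\) are bounded, open and convex, so \(L\) is too. The set \(S\) is convex and bounded. A Minkowski sum of convex sets is convex, and a sum of bounded sets is bounded. For openness, write \(L+S=\bigcup_{s\in S}(L+s)\), a union of open translates. This step does not need \(S\) to be open, which matters when \(v=0\) and \(S=\{0\}\). If \(K=-K\), then \(-(K-v)=K+v\) and \(-(K+v)=K-v\), so \(-L=L\). Together with \(-S=S\) this gives \(-\Tv_vK=\Tv_vK\).

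The containment is the main step. Take \(x=y+tv\) with \(y\in L\) and \(|t|<2\). By symmetry of the argument under \(v\mapsto-v\), assume \(t\ge0\), and split on the size of \(t\).

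\emph{Case \(0\le t\le1\).} Since \(y\in K-v\) gives \(y+v\in K\), and \(y\in K+v\) gives \(y-v\in K\), the point \(y+tv\) lies on the segment between \(y-v\) and \(y+v\). Explicitly, \(y+tv\) is a convex combination of \(y-v\) and \(y+v\) with weight \((1+t)/2\) on \(y+v\). Convexity of \(K\) puts \(y+tv\in K\). But we need \(x\in K\pm v\), not \(x\in K\). So instead write \(x=(y+(t-1)v)+v\) and show \(y+(t-1)v\in K\). Here \(t-1\in[-1,0]\), so \(y+(t-1)v\) again lies on the segment \([y-v,y+v]\), hence in \(K\). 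Therefore \(x\in K+v\).

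\emph{Case \(1<t<2\).} Then \(x-v=y+(t-1)v\) with \(t-1\in(0,1)\), and the same segment argument gives \(x-v\in K\), so \(x\in K+v\).

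So for every \(t\in[0,2)\), \(x-v=y+(t-1)v\) with \(t-1\in[-1,1)\), which lies in \([y-v,y+v]\subseteq K\). Symmetrically, \(t<0\) gives \(x+v\in K\), so \(x\in K-v\). No step here is a genuine obstacle; the only care needed is openness when \(v=0\), which the union-of-translates argument handles.
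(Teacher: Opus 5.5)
Your proof is correct and follows the paper's argument: for \(t\ge0\) you write \(x-v=y+(t-1)v\) with \(t-1\in[-1,1)\), a point of the segment \([y-v,y+v]\subseteq K\), and you argue symmetrically for \(t<0\), which is exactly the paper's one-line convexity step. The split into \(0\le t\le1\) and \(1<t<2\) and the detour showing \(x\in K\) are unnecessary, and your treatment of openness as a union of open translates, including the case \(v=0\), fills in what the paper leaves to ``follow from the definition.''
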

\begin{proof}
The geometric properties follow from~\eqref{eq:transform}.
Write \(y=z+tv\), with \(z-v,z+v\in K\) and \(|t|<2\).
If \(t\ge0\), convexity gives \(y-v=z+(t-1)v\in K\).
If \(t\le0\), it gives \(y+v=z+(t+1)v\in K\).
This proves~\eqref{eq:containment}.
\end{proof}
\end{samepage}

\begin{proposition}[Finite-step stability]\label{prop:stability}
Let \(K\subset\R^d\) be nonempty, bounded, open, and convex.
Let \(\kappa\ge0\), \(v\in\R^d\), and suppose
\(\rho\in\Prob(K)\) satisfies~\eqref{eq:cap}.
If
\begin{equation}\label{eq:small-step}
 \kappa\|v\|_2\le\frac13,
\end{equation}
then \(\Tv_vK\) is nonempty and admits
\(\widetilde\rho\in\Prob(\Tv_vK)\) with
\(V_u(\widetilde\rho)\le\kappa\|u\|_2\) for every \(u\in\R^d\).
\end{proposition}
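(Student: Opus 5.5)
The plan has four steps: show that \(\Tv_vK\) is nonempty, lift \(K\) to a convex set one dimension higher, transfer the variation bound to a prescribed section of that lift, and identify that section with \(\Tv_vK\). The case \(v=0\) is trivial, since \(\Tv_0K=K\), so assume \(v\ne0\) and write \(e=v/\|v\|_2\).

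\emph{Nonemptiness.} Decompose \(\rho\) along the lines parallel to \(e\). On a fiber of length \(\ell\), a nonnegative BV function of mass \(m\), vanishing outside the fiber, has one-dimensional variation at least \(2m/\ell\), because it rises from zero and returns to zero. Suppose every fiber had length at most \(2\|v\|_2\). Integrating over fibers (by slicing of BV functions) would give
\[
V_e(\rho)\ge 1/\|v\|_2\ge 3\kappa>\kappa
\]
when \(\kappa>0\), contradicting~\eqref{eq:cap}. When \(\kappa=0\), no density in \(\Prob(K)\) has zero variation in direction \(e\), so the hypothesis already fails. Hence some fiber exceeds \(2\|v\|_2\), and \(\Tv_vK\neq\varnothing\). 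The same fiberwise computation also quantifies how much mass of \(\rho\) lies on long fibers, and I expect to reuse it in the lift.

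\emph{Lift.} I will work in \(\R^{d+1}\) with the convex set
\[
\Omega=\{(y,s): y-sv\in K,\ y+sv\in K\}\cap\{|s|<2\},
\]
or a variant chosen so that \(\Omega\) is symmetric under \(s\mapsto-s\). Its horizontal section at height \(s\) is \((K-sv)\cap(K+sv)\) up to the \(v\)-direction thickening. The aim is to arrange, after an affine reparametrization of the vertical coordinate, that the section at height one is exactly \(\Tv_vK\). From \(\rho\) I will build a density \(P\) on \(\Omega\). A natural choice averages \(\rho\) against translates along \(v\), for instance pushing forward \(\rho(x)\,dx\otimes\) a uniform parameter under \((x,t)\mapsto(x+tv,\cdot)\). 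This choice keeps every horizontal directional variation of \(P\), averaged against the mass of each section, bounded by \(\kappa\|u\|_2\), since translation does not increase \(V_u\). The exact fiber-rearrangement identity along \(v\) then computes the mass-weighted mean absolute height of \(P\). The target is to show that this mean height is at least one precisely when \(\kappa\|v\|_2\le 1/3\): the fiber-length lower bound from the first step controls how much mass sits on fibers too short to survive the transform.

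\emph{Prescribed-section principle (main obstacle).} For a fixed nonnegative weighting \(\mu\) of directions, consider the energy \(\int V_u(\cdot)\,d\mu(u)\). Its minimum over probability densities on the section \(\Omega_s\) is an anisotropic Cheeger constant \(h_\mu(s)\). I will show that \(s\mapsto h_\mu(s)\) is convex along the symmetric convex family of sections. The route is convexity of the first eigenvalue of the anisotropic \(p\)-Laplacian under Minkowski-type combination, in the style of Wang--Xia, for smooth strongly convex norms, followed by the limit \(p\downarrow1\) and approximation of a general \(\mu\). Applying Jensen's inequality with the height distribution of \(P\) bounds \(h_\mu\) at the mean absolute height, which is at least one, by the averaged variation of \(P\), hence by \(\int\kappa\|u\|_2\,d\mu\). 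By monotonicity in height, or by the symmetry-plus-convexity argument, this gives \(h_\mu(1)\le\int\kappa\|u\|_2\,d\mu\) for every \(\mu\).

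\emph{Common density.} The last step turns these bounds, one for each \(\mu\), into a single density on the section. The functional \((\widetilde\rho,\mu)\mapsto\int(V_u(\widetilde\rho)-\kappa\|u\|_2)\,d\mu\) is convex and lower semicontinuous in \(\widetilde\rho\), because \(\Prob\) is BV-compact in \(L^1\) under a uniform variation bound, and it is linear in \(\mu\) over probability measures on the sphere. A Sion-type minimax theorem then yields one \(\widetilde\rho\in\Prob(\Tv_vK)\) with \(V_u(\widetilde\rho)\le\kappa\|u\|_2\) for all \(u\).

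I expect the Cheeger-convexity step to be the hardest, particularly passing to \(p=1\) with BV boundary jumps and open sections. A second delicate point is matching the strict threshold in~\eqref{eq:transform} to the section at height exactly one.
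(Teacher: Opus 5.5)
\textbf{There is a genuine gap in your lift step.} Your nonemptiness argument is fine. Your Cheeger-convexity and Jensen step matches the paper. Your minimax route to a common density also works: Sion gives only $\inf_{\widetilde\rho}\sup_u(V_u(\widetilde\rho)-\kappa\|u\|_2)\le0$, so you still need the paper's compactness and lower semicontinuity step for attainment.

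\emph{Why your lift cannot work.} Every section of your $\Omega$ is $(K-sv)\cap(K+sv)\subseteq K$, since $y$ is the midpoint of $y\pm sv$. No affine change of the vertical variable can make one of these equal to $\Tv_vK$, which is typically not contained in $K$: for $K=(-A,A)$ and $v=1$, $\Tv_1K=(-A-1,A+1)$. Worse, your stated target is unattainable for this lift. Take $A\ge3$ and $\rho$ uniform on $(-A,A)$, so $\kappa=1/A$ and $\kappa\|v\|_2\le1/3$. Every probability density on the height-one section $(K-v)\cap(K+v)=(-A+1,A-1)$ has $V_1\ge1/(A-1)>\kappa$. So, by your own section principle, no density on $\Omega$ with horizontal variations at most $\kappa$ can have mean absolute height $\ge1$. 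Your translated density is also not supported in $\Omega$. With $y=x+tv$ placed at height $t$, you know $y-tv\in K$ but not $y+tv\in K$.

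\emph{What the paper does instead.}
\begin{itemize}
\item It starts from the sheared, non-symmetric cylinder $B=\{(y,s):|s|<3,\ y+sv\in K\}$ carrying $R(y,s)=\frac16\rho(y+sv)\mathbf1_{\{|s|<3\}}$. This density has $V_{(u,0)}(R)=V_u(\rho)$ exactly and mean absolute height $3/2$.
\item It then applies vertical Steiner symmetrization $B^\star=\{(y,s):|s|<\ell(y)/2\}$, together with the fiberwise symmetric decreasing rearrangement $R^\star$.
\item Rearrangement is an $L^1$ contraction under horizontal shifts, so $V_{(u,0)}(R^\star)\le V_u(\rho)$.
\item $R^\star$ lives on the symmetric convex body $B^\star$. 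Its height-one section is $\{\ell>2\}=\Tv_vK$ exactly because the cutoff is $3$, and this also handles the strict threshold.
\item The overlap identity gives $\int|s|R^\star=\frac32-\frac1{24}\int_0^6(6-h)\|\rho-\rho(\cdot+hv)\|_1\dd h\ge\frac32\bigl(1-V_v(\rho)\bigr)\ge1$.
\end{itemize}
Thus the thickening by $\{tv:|t|<2\}$ is produced by symmetrizing the sheared lift, not by intersecting translates. Your proposed bookkeeping of mass on short fibers is replaced by this exact height-loss formula, which your proposal does not supply.
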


The proof in Section~\ref{sec:stability} only needs \(V_v(\rho)\le1/3\)
as its step condition; \eqref{eq:small-step} ensures this
under~\eqref{eq:cap}. The stated form gives the following criterion
for iterated signing.

\begin{samepage}
\begin{corollary}[Symmetric targets]\label{cor:target}
Let \(K\subset\R^d\) be nonempty, bounded, open, and convex, with
\(K=-K\). Suppose that, for some \(\kappa\ge0\), there is
\(\rho\in\Prob(K)\) satisfying~\eqref{eq:cap}.
For every integer \(n\ge1\) and family \(v_1,\ldots,v_n\in\R^d\) with
\(\kappa\max_j\|v_j\|_2\le1/3\), there are signs
\(\varepsilon_1,\ldots,\varepsilon_n\in\{-1,1\}\) such that
\[
 \sum_{j=1}^n\varepsilon_jv_j\in K.
\]
\end{corollary}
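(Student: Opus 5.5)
The plan is to iterate Proposition~\ref{prop:stability} forward along the sequence \(v_n,v_{n-1},\ldots,v_1\), building a nested chain of bodies, and then to recover signs backward using Lemma~\ref{lem:containment}. Set \(K_0=K\) and \(\rho_0=\rho\). Inductively, for \(k=1,\ldots,n\), apply Proposition~\ref{prop:stability} to \(K_{k-1}\), the density \(\rho_{k-1}\), and the vector \(v_{n-k+1}\). The step condition \(\kappa\|v_{n-k+1}\|_2\le1/3\) holds by hypothesis, with the same \(\kappa\) at every step. This gives \(K_k=\Tv_{v_{n-k+1}}K_{k-1}\), which is nonempty, and a density \(\rho_k\in\Prob(K_k)\) satisfying~\eqref{eq:cap} with the same \(\kappa\). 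By Lemma~\ref{lem:containment}, each \(K_k\) is again bounded, open, and convex, so the hypotheses of the proposition are reproduced at every step and the induction runs for all \(n\) steps.

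Symmetry is also preserved: \(K_0=-K_0\), and the last clause of Lemma~\ref{lem:containment} gives \(K_k=-K_k\) inductively. Since \(K_n\) is nonempty, convex, and symmetric, it contains \(0\): pick \(y\in K_n\), note \(-y\in K_n\), and take the midpoint.

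Next we recover the signs backward. Put \(s_n=0\in K_n\). Suppose \(s_k\in K_k=\Tv_{v_{n-k+1}}K_{k-1}\). By~\eqref{eq:containment} there is a sign \(\varepsilon_{n-k+1}\) with
\[
s_{k-1}:=s_k+\varepsilon_{n-k+1}v_{n-k+1}\in K_{k-1}.
\]
This works because \(s_k\in K_{k-1}-v\) gives \(s_k+v\in K_{k-1}\), and \(s_k\in K_{k-1}+v\) gives \(s_k-v\in K_{k-1}\). After all \(n\) steps, \(s_0=\sum_j\varepsilon_jv_j\in K_0=K\), which is the claim.

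All the analytic difficulty is carried by Proposition~\ref{prop:stability}, which we may assume. The only points requiring care are these:
\begin{itemize}
\item The invariant passed along the chain must be exactly the hypothesis of the proposition: a bounded open convex set together with a density in \(\Prob\) obeying~\eqref{eq:cap} with the unchanged \(\kappa\). This is why a uniform \(\kappa\) is needed.
\item Nonemptiness of \(K_n\) must come from the proposition itself. Without it the transform could be empty, since it uses the strict fiber-length threshold.
\item The order of processing must be reversed, so that the final point \(0\) is pulled back to \(K\).
\end{itemize}
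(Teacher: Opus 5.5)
Your proof is correct and is essentially the paper's argument: you iterate Proposition~\ref{prop:stability} with the fixed \(\kappa\), use Lemma~\ref{lem:containment} to keep each \(K_k\) convex and symmetric so that \(0\in K_n\), and then pull \(0\) back through the chain via~\eqref{eq:containment}. Reversing the indexing of the \(v_j\) is harmless but not needed, since the paper builds \(K_j=\Tv_{v_j}K_{j-1}\) in the natural order and only the sign recovery runs backward, for \(j=n,\ldots,1\).
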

\begin{proof}
Set \(K_0=K\) and \(K_j=\Tv_{v_j}K_{j-1}\) for \(1\le j\le n\).
Proposition~\ref{prop:stability} applies at every stage with the same
\(\kappa\). Each \(K_j\) is nonempty, bounded, open, and convex.
Lemma~\ref{lem:containment} preserves symmetry about the origin.
If \(z\in K_n\), then \(-z\in K_n\), so convexity gives \(0\in K_n\).
This is the only use of symmetry in the signing reduction; the density
chosen at each stage need not be symmetric.

Starting from \(z_n=0\), choose successively
\(\varepsilon_j\in\{-1,1\}\), for \(j=n,\ldots,1\), so that
\[
 z_{j-1}=z_j+\varepsilon_jv_j\in K_{j-1}.
\]
Such a choice exists by~\eqref{eq:containment}. Consequently
\[
 \sum_{j=1}^n\varepsilon_jv_j=z_0\in K.
 \qedhere
\]
\end{proof}
\end{samepage}

The initial bound comes from the product cosine-squared density used
by Smirnov and Vershynin~\cite[Section~1.6, proof of Corollary~1.3]{SV26}.
It is the square of the \(L^2\)-normalized first Dirichlet eigenfunction
of the cube.
We estimate its directional \(L^1\)-variation.

\begin{lemma}[A cube density]\label{lem:cube}
For \(C>0\), the probability density
\begin{equation}\label{eq:cube-density}
 \rho_C(x)=C^{-d}\prod_{i=1}^d
       \cos^2\!\left(\frac{\pi x_i}{2C}\right)
       \mathbf1_{(-C,C)^d}(x)
\end{equation}
belongs to \(\Prob((-C,C)^d)\) and satisfies
\begin{equation}\label{eq:cube-cap}
 V_u(\rho_C)\le\frac{\sqrt{2\pi}}C\,\|u\|_2
 \qquad(u\in\R^d).
\end{equation}
\end{lemma}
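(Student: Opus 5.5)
Since \(\cos^2(\pi x_i/(2C))\) vanishes to second order at \(x_i=\pm C\), the function \(\rho_C\) is Lipschitz on \(\R^d\) and compactly supported. Hence \(\rho_C\in W^{1,1}(\R^d)\subset BV(\R^d)\), and \(V_u(\rho_C)=\int|u\cdot\nabla\rho_C|\) with no jump contribution. Nonnegativity, support in \((-C,C)^d\) and \(\int\rho_C=1\) are immediate, because \(\int_{-1}^1\cos^2(\pi y/2)\dd y=1\). The real task is to show that the naive bound is not sharp enough and to find the exact computation.

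The first attempt is Cauchy--Schwarz against \(\rho_C\), i.e.\ a Fisher-information bound. It gives \(V_u(\rho_C)\le(\pi/C)\|u\|_2\), using the eigenvalue \((\pi/2C)^2\) and the vanishing of the cross terms. That constant \(\pi\) is worse than \(\sqrt{2\pi}\), so I will compute the variation more precisely.

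Substitute \(y=x/C\) and use \(\sin(\pi y)=2\sin(\pi y/2)\cos(\pi y/2)\). On the cube this gives the pointwise identity
\[
 u\cdot\nabla\rho_C(x)=-\frac{\pi}{C}\,\rho_C(x)\sum_{i=1}^d u_i\tan\!\Bigl(\frac{\pi x_i}{2C}\Bigr).
\]
Therefore \(V_u(\rho_C)=\frac{\pi}{C}\,\mathbb E\bigl|\sum_i u_iT_i\bigr|\). Here \(T_i=\tan(\pi Y_i/2)\), and the \(Y_i\) are independent with density \(\cos^2(\pi y/2)\) on \((-1,1)\).

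Changing variables \(t=\tan(\pi y/2)\) shows each \(T_i\) has density \(\frac{2}{\pi}(1+t^2)^{-2}\) on \(\R\). This is a Student \(t\)-distribution with three degrees of freedom, rescaled by \(1/\sqrt3\). In particular \(\mathbb E T_i^2=\int_{-1}^1\sin^2(\pi y/2)\dd y=1\).

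The key step is that a Student \(t\) law is a scale mixture of centered Gaussians. So \(T_i\overset{d}{=}\sigma_ig_i\), with \(g_i\) standard Gaussian, \(\sigma_i>0\) random, and all of these independent; moreover \(\mathbb E\sigma_i^2=\mathbb E T_i^2=1\).

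Conditionally on \((\sigma_i)\), the sum \(\sum_iu_iT_i\) is Gaussian with variance \(\sum_iu_i^2\sigma_i^2\). Hence
\[
 \mathbb E\Bigl|\sum_iu_iT_i\Bigr|=\sqrt{\tfrac2\pi}\,\mathbb E\Bigl(\sum_iu_i^2\sigma_i^2\Bigr)^{1/2}
 \le\sqrt{\tfrac2\pi}\Bigl(\sum_iu_i^2\,\mathbb E\sigma_i^2\Bigr)^{1/2}=\sqrt{\tfrac2\pi}\,\|u\|_2,
\]
by Jensen's inequality for the concave square root. Multiplying by \(\pi/C\) yields \eqref{eq:cube-cap}.

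The main obstacle is exactly this improvement from \(\pi\) to \(\sqrt{2\pi}\). I will justify the Gaussian-mixture representation explicitly rather than quote it. Take \(\sigma_i^2=3/W_i\) with \(W_i\sim\chi^2_3\); then \(\sqrt3\,g_i/\sqrt{W_i}\) is standard \(t_3\), so \(\sigma_ig_i\) is \(t_3\) scaled by \(\sqrt3\). Since \(T_i\) is \(t_3/\sqrt3\), I will check the scaling and that \(\mathbb E\sigma_i^2=1\) (using \(\mathbb E[1/W_i]=1\) for \(\chi^2_3\)), so the conditional-Gaussian computation is exact.
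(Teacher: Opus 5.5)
Your proposal follows the paper's proof essentially step for step: the identity \(u\cdot\nabla\rho_C=-(\pi/C)\rho_C\sum_iu_i\tan(\pi x_i/(2C))\), the reduction to \((\pi/C)\,\mathbb E|\sum_iu_iT_i|\) with \(T_i\) of density \(2/(\pi(1+t^2)^2)\), the Gaussian scale mixture, and Jensen after conditioning on the scales.

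There is one slip in your last paragraph. With \(\sigma_i^2=3/W_i\), the product \(\sigma_ig_i=\sqrt3\,g_i/\sqrt{W_i}\) is a standard \(t_3\) variable of variance \(3\), and \(\mathbb E\sigma_i^2=3\). Since \(T_i\) is \(t_3/\sqrt3\), the correct choice is \(\sigma_i^2=1/W_i\). That is, \(T_i\) has the law of \(g_i/\sqrt{W_i}\) with \(W_i\sim\chi^2_3\), which is the paper's representation \(T_i=G_i/\sqrt{S_i}\).

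With that fix, \(\mathbb E\sigma_i^2=\mathbb E W_i^{-1}=1\), consistent with the identity \(\mathbb E\sigma_i^2=\mathbb E T_i^2\) in your main text, and the rest of your argument is unchanged.
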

\begin{samepage}
\begin{proof}
Each normalized one-dimensional factor has integral one and vanishes at its
endpoints. The zero extension is in \(W^{1,1}(\R^d)\).
If \(X\) has density \(\rho_C\), the variables
\(T_i=\tan(\pi X_i/(2C))\) are independent with density
\(2/(\pi(1+t^2)^2)\). Differentiation gives
\[
 V_u(\rho_C)=\frac\pi C\,
       \mathbb E\left|\sum_i u_iT_i\right|.
\]
The independent \(T_i\) admit the joint representation in distribution
\(T_i=G_i/\sqrt{S_i}\), where the \(G_i\)
are standard Gaussian variables and the \(S_i\) have the
\(\chi^2_3\) distribution, all independent. Indeed,
\[
 \int_0^\infty
 \sqrt{\frac{s}{2\pi}}e^{-st^2/2}
 \frac{s^{1/2}e^{-s/2}}{\sqrt{2\pi}}\dd s
 =\frac{2}{\pi(1+t^2)^2},
\]
and the same gamma integral gives \(\mathbb E S_i^{-1}=1\).
Conditioning on the \(S_i\) and applying Jensen yields
\[
 \begin{aligned}
 \mathbb E\left|\sum_i u_iT_i\right|
 &=\sqrt{\frac2\pi}\,
    \mathbb E\sqrt{\sum_i\frac{u_i^2}{S_i}}\\
 &\le\sqrt{\frac2\pi}\,
    \sqrt{\sum_i u_i^2\mathbb E S_i^{-1}}
 =\sqrt{\frac2\pi}\,\|u\|_2.
 \end{aligned}
\]
This proves~\eqref{eq:cube-cap}.
The \(T_i\) are centered, with
\(\mathbb E T_i^2=\mathbb E S_i^{-1}=1\), so Cauchy--Schwarz
alone would give \(V_u(\rho_C)\le(\pi/C)\|u\|_2\).
Conditioning on the \(S_i\) improves the constant
to \(\sqrt{2\pi}/C\).
\end{proof}
\end{samepage}

\begin{proof}[Proof of Theorem~\ref{thm:komlos}]
Set \(C=3\sqrt{2\pi}\), \(K=(-C,C)^m\), and \(\kappa=1/3\).
Lemma~\ref{lem:cube} supplies a density satisfying~\eqref{eq:cap}.
Since \(\|v_j\|_2\le1\), Corollary~\ref{cor:target} gives a signing
whose sum lies in \(K\). Membership in this open cube gives the
strict inequality in~\eqref{eq:main}.
\end{proof}

\begin{remark}[Sharpness for the product density]\label{rem:cube-sharp}
The coefficient \(\sqrt{2\pi}\) in~\eqref{eq:cube-cap} is optimal
uniformly over dimension for this density family. To see this, fix
\(C>0\), write \(\rho_C^{(d)}\) for~\eqref{eq:cube-density} in
\(\R^d\), and take \(u_d=d^{-1/2}(1,\ldots,1)\).
The conditional Gaussian calculation gives
\[
 C V_{u_d}(\rho_C^{(d)})
 =\sqrt{2\pi}\,\mathbb E\sqrt{\frac1d\sum_{i=1}^d S_i^{-1}},
\]
where the \(S_i\) are independent with law \(\chi^2_3\).
Since \(\mathbb E S_i^{-1}=1\), the strong law makes the square root
tend almost surely to one. Fatou bounds the lower limit of these
expectations from below by one, while Jensen bounds each expectation
from above by one. Thus
\(C V_{u_d}(\rho_C^{(d)})\to\sqrt{2\pi}\).
This concerns the fixed product family; it does not establish optimality
among all cube-supported densities or for the Koml\'os constant.
\end{remark}

\section{A Cheeger Principle for Convex Sections}\label{sec:cheeger}

The lift will control horizontal variations averaged over its sections.
We seek the same bounds on one prescribed section. We first treat finite
weighted sums of directional variations, then use convex separation
to recover a common density.

We record the elementary variation facts used in this passage.

\begin{lemma}[Translation and compactness]\label{lem:bv}
For every \(f\in L^1(\R^d)\), \(u\in\R^d\), and \(h\ne0\),
\begin{equation}\label{eq:translation}
 \|f(\cdot+hu)-f\|_1\le |h|V_u(f),\qquad
 V_u(f)=\lim_{h\to0}\frac{\|f(\cdot+hu)-f\|_1}{|h|},
\end{equation}
where the equality allows the value \(+\infty\).
The functional \(V_u\) is convex and lower semicontinuous in \(L^1\).
Probability densities supported in a fixed bounded set and with
uniformly bounded coordinate variations have an \(L^1\)-convergent
subsequence.
\end{lemma}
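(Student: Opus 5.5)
The proof proceeds through the claims in the order stated: the translation inequality, the limit formula, convexity and lower semicontinuity, and compactness. Each is obtained by approximation and duality from the definition~\eqref{eq:variation}.

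\emph{Translation inequality.} If \(V_u(f)=\infty\) there is nothing to prove, so assume it is finite. Take a standard mollifier \(\eta_\delta\) and set \(f_\delta=f*\eta_\delta\). Then \(f_\delta\) is smooth and integrable. Its derivative \(\partial_uf_\delta=(D_uf)*\eta_\delta\) has \(L^1\) norm at most \(|D_uf|(\R^d)=V_u(f)\). The fundamental theorem of calculus along the line \(s\mapsto x+su\), followed by Fubini, gives
\[
\|f_\delta(\cdot+hu)-f_\delta\|_1\le\int_0^{|h|}\|\partial_uf_\delta\|_1\,ds\le|h|V_u(f).
\]
Since \(f_\delta\to f\) in \(L^1\) and translation is an \(L^1\) isometry, letting \(\delta\to0\) yields the inequality. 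As a consequence, the \(\limsup\) of the difference quotients is at most \(V_u(f)\).

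\emph{Limit formula.} The reverse bound comes from duality. For \(\phi\in C_c^1\) with \(\|\phi\|_\infty\le1\), a change of variables gives
\[
\int f\,\frac{\phi(\cdot-hu)-\phi}{h}=\int\frac{f(\cdot+hu)-f}{h}\,\phi ,
\]
and the right side is at most \(\|f(\cdot+hu)-f\|_1/|h|\). As \(h\to0\), the difference quotient of \(\phi\) converges uniformly, with compact support, to \(-\partial_u\phi\). The left side therefore tends to \(-\int f\,\partial_u\phi\). Taking the supremum over \(\phi\), and using the symmetry \(\phi\mapsto-\phi\), bounds \(V_u(f)\) by the \(\liminf\) of the quotients. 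This holds including the case where the value is \(+\infty\).

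\emph{Convexity and lower semicontinuity.} For each fixed \(\phi\), the map \(f\mapsto\int f\,\partial_u\phi\) is linear. It is also continuous in \(L^1\), because \(\partial_u\phi\) is bounded. A pointwise supremum of continuous linear functionals is convex and \(L^1\)-lower semicontinuous.

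\emph{Compactness.} I would apply the Fréchet--Kolmogorov criterion. The densities are bounded in \(L^1\) because each has integral one. Their supports lie in a fixed bounded set, so the tails vanish uniformly. For equicontinuity of translations, write \(w=\sum_ih_ie_i\) and telescope along coordinate directions. The translation inequality then gives
\[
\|f(\cdot+w)-f\|_1\le\sum_i|h_i|V_{e_i}(f)\le\|w\|_1M,
\]
where \(M\) bounds the coordinate variations. This is uniform over the family. Fréchet--Kolmogorov then yields an \(L^1\)-convergent subsequence.

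The limit is again a probability density with the same support restriction. Nonnegativity and the support condition pass to almost-everywhere limits along a further subsequence. The total mass is preserved under \(L^1\) convergence.

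\emph{Main obstacle.} The only delicate point is the mollification step. One needs that \(V_u(f)<\infty\) gives \(D_uf\) as a finite measure whose mollification has \(L^1\) norm at most its total variation. This follows from the Riesz representation of the bounded functional \(\phi\mapsto\int f\,\partial_u\phi\) on \(C_c\), which extends to \(C_0\) by density, together with Young's inequality for measures.
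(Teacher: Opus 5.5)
Your proof is correct and follows the paper's route step for step: mollification and integration along segments for the translation bound, duality against difference quotients of test functions for the reverse inequality, and a supremum of continuous linear functionals for convexity and lower semicontinuity. For compactness you cite the Fr\'echet--Kolmogorov criterion, whereas the paper reproves that criterion inline via cube averages and a diagonal selection; both rest on the same telescoped coordinate translation modulus.
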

\begin{proof}
For finite \(V_u(f)\), mollify \(f\). The directional derivative of the
mollification has \(L^1\) norm at most \(V_u(f)\). Integrating along
segments and passing to the \(L^1\) limit gives the first inequality.
Conversely, for each test function in~\eqref{eq:variation},
integration against difference quotients and passage to the limit give
\[
 \left|\int f\,\partial_u\phi\right|
 \le\|\phi\|_\infty
    \liminf_{h\to0}\frac{\|f(\cdot+hu)-f\|_1}{|h|}.
\]
Taking the supremum proves the equality, including the infinite case.
Convexity and lower semicontinuity follow directly from the same
test-function definition.

Coordinate bounds give a uniform bound on \(|Df|(\R^d)\), since
\(|Df|(\R^d)\le\sum_iV_{e_i}(f)\).
Successive translations in the coordinate directions give
\[
 \|f(\cdot+h)-f\|_1
 \le\sum_{i=1}^d |h_i|V_{e_i}(f),\qquad h\in\R^d,
\]
so the translation modulus is uniform over the family.
Approximating each density by its averages on cubes of a fixed small
mesh therefore gives uniformly small \(L^1\) errors. These averages lie
in a bounded subset of a finite-dimensional space, because the supports
are bounded and the masses are one. A diagonal selection over
successively finer meshes gives an \(L^1\)-Cauchy subsequence,
proving the compactness assertion.
\end{proof}

For a continuous, even, convex, positively one-homogeneous function
\(H:\R^d\to[0,\infty)\), \(\rho\in BV(\R^d)\), and a nonempty
bounded open convex set \(K\subset\R^d\), define
\begin{equation}\label{eq:cheeger}
 E_H(\rho)=\int_{\R^d}
 H\!\left(\frac{dD\rho}{d|D\rho|}\right)\dd|D\rho|,
 \qquad
 h_H(K)=\inf_{\rho\in\Prob(K)}E_H(\rho).
\end{equation}
Here \(dD\rho/d|D\rho|\) is the Radon--Nikodym density of the
vector measure \(D\rho\) with respect to its total variation.
Writing it as \(\sigma_\rho\), we have
\[
 D_u\rho=(u\cdot\sigma_\rho)|D\rho|,
 \qquad
 V_u(\rho)=\int|u\cdot\sigma_\rho|\dd|D\rho|.
\]
The definition immediately gives domain monotonicity:
\(K\subset L\) implies \(h_H(L)\le h_H(K)\).
For a bounded set \(E\) with smooth boundary and positive volume,
the uniform density has energy
\[
 E_H\!\left(\frac{\mathbf1_E}{|E|}\right)
 =\frac1{|E|}\int_{\partial E}H(\nu_E)\dd\mathcal H^{d-1},
\]
where \(\nu_E\) is the unit normal and \(\mathcal H^{d-1}\) is surface
measure. Thus \(H\) assigns a cost to each boundary orientation, and
the normalization compares that cost with the volume carrying the mass.
More generally, write
\(P_H(E)=\int_{\partial^*E}H(\nu_E)\,d\mathcal H^{d-1}\)
for a set of finite perimeter, where \(\partial^*E\) is its
reduced~boundary. The anisotropic BV coarea formula, as used by Kawohl and
Novaga~\cite[proof of Theorem~4.1]{KN08}, and layer cake give, for
\(\rho\in\Prob(K)\),
\[
 E_H(\rho)=\int_0^\infty P_H(\{\rho>s\})\dd s,
 \qquad
 1=\int_0^\infty |\{\rho>s\}|\dd s.
\]
Almost every superlevel set at a positive level has finite perimeter.
The energy is at least the infimum of the perimeter-to-volume ratios
of those with positive volume. Uniform densities
give the reverse inequality, so
\begin{equation}\label{eq:cheeger-perimeter}
 h_H(K)=
 \inf_{\substack{|E\setminus K|=0,\ |E|>0\\
                  E\ \text{of finite perimeter}}}
          \frac{P_H(E)}{|E|}.
\end{equation}
For a norm \(H\), this is the anisotropic Cheeger constant of
Kawohl and Novaga~\cite[equations~(5),~(10), and proof of Theorem~4.1]{KN08}.
In their notation, our gradient and normal integrand \(H\) corresponds
to the dual norm \(\phi^*\), not to the norm \(\phi\) defining distance.
Perimeters are computed in \(\R^d\), including any reduced boundary
lying on \(\partial K\).

We use integrands that are finite convex combinations of absolute
linear forms:
\begin{equation}\label{eq:mixture}
 H(\xi)=\sum_{\ell=1}^N\alpha_\ell|u_\ell\cdot\xi|,
 \qquad
 \|u_\ell\|_2=1,\quad \alpha_\ell\ge0,\quad
 \sum_\ell\alpha_\ell=1.
\end{equation}
The corresponding energy is
\(E_H(\rho)=\sum_\ell\alpha_\ell V_{u_\ell}(\rho)\).
Such an \(H\) can vanish in nonzero directions and need not be a norm.
The connection with first Dirichlet eigenvalues of anisotropic
\(p\)-Laplacians is treated by Kawohl and Novaga~\cite[Theorem~4.1]{KN08}.
We give the limit argument for the global BV class in~\eqref{eq:cheeger},
then pass to possibly degenerate combinations~\eqref{eq:mixture}.

\begin{lemma}[Minkowski convexity]\label{lem:minkowski}
For \(H\) as in~\eqref{eq:mixture}, nonempty bounded open convex
sets \(K_0,K_1\subset\R^d\), and \(0\le t\le1\),
\begin{equation}\label{eq:cheeger-convexity}
 h_H((1-t)K_0+tK_1)
 \le(1-t)h_H(K_0)+t h_H(K_1).
\end{equation}
\end{lemma}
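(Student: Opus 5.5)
We prove \eqref{eq:cheeger-convexity} by a $p\downarrow1$ limit of anisotropic $p$-Laplacian first eigenvalues, as announced in the introduction.

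\textbf{Smooth approximation of $H$.} First replace $H$ by a smooth, strongly convex norm
\[
 H_\delta(\xi)=\Bigl(\bigl(H*\eta_\delta\bigr)(\xi)^2+\delta\|\xi\|_2^2\Bigr)^{1/2},
\]
where $H*\eta_\delta$ is a radial mollification, re-homogenized if needed. We arrange that $H\le H_\delta\le H+c(\delta)\|\cdot\|_2$ with $c(\delta)\to0$ and that $H_\delta^2$ is smooth and uniformly convex away from the origin. Since every admissible $\rho$ satisfies $E_{H_\delta}(\rho)\ge E_H(\rho)$ and $E_{H_\delta}(\rho)\le E_H(\rho)+c(\delta)|D\rho|(\R^d)$, we get $h_{H_\delta}(K)\to h_H(K)$. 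For the upper bound, restrict the infimum to near-minimizers of $E_H$. These have controlled total variation, because $E_H+\delta|D\rho|$ can be approximated by first taking a near-minimizer of $E_H$ and then mollifying it slightly inside $K$. Hence it suffices to prove \eqref{eq:cheeger-convexity} for $H_\delta$ and let $\delta\to0$.

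\textbf{Eigenvalue convexity for fixed $p$.} For the norm $H_\delta$ and $p>1$, let
\[
 \lambda_p(K)=\inf\Bigl\{\int H_\delta(\nabla w)^p : w\in W^{1,p}_0(K),\ \|w\|_p=1\Bigr\}.
\]
Appendix~\ref{app:eigenvalue}, following Wang and Xia~\cite[Theorem~1.2]{WX11}, gives the Brunn--Minkowski inequality $\lambda_p(K_t)^{-1/(p+d)}\ge(1-t)\lambda_p(K_0)^{-1/(p+d)}+t\lambda_p(K_1)^{-1/(p+d)}$ for $K_t=(1-t)K_0+tK_1$. I would then extract the concavity of $t\mapsto\lambda_p(K_t)^{-1/(p+d)}$ and take $p$-th roots. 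The exponent $p/(p+d)$ on $\lambda_p^{1/p}$ tends to $1/(1+d)$, not to $1$. So the clean route is to use the Brunn--Minkowski form to get the geometric-mean bound
\[
 \lambda_p(K_t)^{1/p}\le\max\bigl(\lambda_p(K_0)^{1/p},\lambda_p(K_1)^{1/p}\bigr)^{\cdots}.
\]
If that turns out too weak, I would instead rely on the appendix to give plain convexity of $t\mapsto\lambda_p(K_t)^{1/p}$, or of a power of it that tends to convexity as $p\downarrow1$. Which of these the appendix actually supplies is the point I must check.

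\textbf{The limit $p\downarrow1$.} Next I need $\lambda_p(K)^{1/p}\to h_{H_\delta}(K)$.
\begin{itemize}
\item For the $\limsup$, take a smooth compactly supported near-optimal $w$ for the Cheeger problem in $K$. Test $\lambda_p$ with $w^{1/p}$ suitably normalized, or directly with $w$, and use Hölder's inequality.
\item For the $\liminf$, use the inequality $\int H(\nabla|w|^p)\le p\,\bigl(\int H(\nabla w)^p\bigr)^{1/p}\|w\|_p^{p-1}$. Applied to the normalized density $\rho=|w|^p$, which is admissible in $\Prob(K)$ and lies in $W^{1,1}$, it gives $h_{H_\delta}(K)\le p\,\lambda_p(K)^{1/p}$.
\end{itemize}
Combining the limits with the $p$-level convexity gives \eqref{eq:cheeger-convexity} for $H_\delta$.

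\textbf{Main obstacle.} The hard step is the middle one. I must confirm that the Wang--Xia convexity, stated for strongly convex smooth norms, survives the $p\to1$ limit in a form that is linear in $h$ rather than a Brunn--Minkowski-type power. If only the power form is available, I would try a Lagrangian alternative. Take near-minimizing sets $E_0\subset K_0$ and $E_1\subset K_1$ from \eqref{eq:cheeger-perimeter} and consider $E_t=(1-t)E_0+tE_1\subset K_t$. By Brunn--Minkowski for mixed anisotropic perimeters, $P_H(E_t)$ is linear in $t$ for convex $E_0,E_1$, and $|E_t|^{1/d}$ is concave. Optimal Cheeger sets of convex domains are convex, so we may take $E_0,E_1$ convex. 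Then
\[
 \frac{P_H(E_t)}{|E_t|}\le\frac{(1-t)P_H(E_0)+tP_H(E_1)}{\bigl((1-t)|E_0|^{1/d}+t|E_1|^{1/d}\bigr)^d}.
\]
Bounding this ratio by the convex combination of the two ratios is the remaining elementary inequality that needs checking. It requires the Cheeger identity $P_H(E_i)=h_i|E_i|$, used together with a mixed-volume refinement.
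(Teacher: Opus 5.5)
There is a genuine gap at the step you flag yourself. You never commit to an inequality at fixed $p$ that passes to \eqref{eq:cheeger-convexity}, and your worry about the Brunn--Minkowski power form points the wrong way: that form is stronger than what you need, not weaker. No exponent has to tend to one. Suppose $\lambda_p(K_t)^{-r}\ge(1-t)\lambda_p(K_0)^{-r}+t\lambda_p(K_1)^{-r}$ for some $r>0$. (The scaling $\lambda_p(sK)=s^{-p}\lambda_p(K)$ makes $r=1/p$ the natural choice, not $1/(p+d)$.) Then $\lambda_p(K_t)$ is at most the power mean of order $-r$ of $\lambda_p(K_0)$ and $\lambda_p(K_1)$. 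Hence it is at most their geometric mean, and so at most their arithmetic mean $(1-t)\lambda_p(K_0)+t\lambda_p(K_1)$. Letting $p\downarrow1$ in this linear inequality, with $\lambda_p\to h_H$ at $K_0$, $K_1$ and $K_t$, proves the lemma. The max-type bound you actually display gives in the limit only $h_H(K_t)\le\max\{h_H(K_0),h_H(K_1)\}$. That is not \eqref{eq:cheeger-convexity}, and it would not support the Jensen step in Lemma~\ref{lem:section}.

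The paper takes the direct route. It uses plain convexity $\lambda_{p,H}(K_t)\le(1-t)\lambda_{p,H}(K_0)+t\lambda_{p,H}(K_1)$ for smooth strongly convex norms, proved in Proposition~\ref{prop:eigenvalue-convexity} by infimal convolution of $-\log u_i$ for the log-concave first eigenfunctions $u_i$. Your two bounds for $\lambda_p\to h_H$ match the paper's: the density $|f|^p/\int|f|^p$ with H\"older for the lower limit, and a fixed smooth compactly supported near-optimal density for the upper limit. The existence of such smooth densities still needs an argument. The paper contracts toward an interior point and then mollifies, which also accounts for jumps on the support boundary.

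The Lagrangian fallback does not repair this. For convex $E$, $P_H(E)=d\,V(E[d-1],W)$, where $W$ is the (possibly lower-dimensional) convex body with support function $H$. So $P_H((1-t)E_0+tE_1)$ is a polynomial in $t$ of degree up to $d-1$, not a linear function once $d\ge3$. For instance, take $H(\xi)=\tfrac13(|\xi_1|+|\xi_2|+|\xi_3|)$, $E_0=(0,1)\times(0,\epsilon)^2$ and $E_1=(0,\epsilon)\times(0,1)\times(0,\epsilon)$. Both have perimeter of order $\epsilon$, while $\tfrac12E_0+\tfrac12E_1$ is a slab of perimeter about $1/6$. Aleksandrov--Fenchel bounds the mixed terms only from below, which is the wrong direction for an upper bound on $P_H(E_t)$. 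Moreover, for mixtures vanishing on nonzero vectors, Cheeger sets need not exist: for $H(\xi)=|\xi_1|$ on the unit disk, the infimum $1$ is not attained. So the reduction to convex $E_0,E_1$ is not justified as stated.

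Finally, re-homogenizing a mollified $H$ need not preserve convexity. The paper avoids this with the explicit norms $H_\epsilon(\xi)=\sum_\ell\alpha_\ell\sqrt{(u_\ell\cdot\xi)^2+\epsilon^2\|\xi\|_2^2}$. Its convergence $h_{H_\epsilon}(K)\to h_H(K)$ uses one fixed near-minimizer with finite $|D\rho|(\R^d)$ and needs no uniform total-variation control.
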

\begin{proof}
First let \(H\) be a norm, \(C^\infty\) away from the origin,
with \(D^2(H^2)\) positive definite there. Thus \(H\) is even,
positive away from zero, and homogeneous of degree one.
For \(1<p<\infty\), put
\begin{equation}\label{eq:eigenvalue}
 \lambda_{p,H}(K)=
 \inf_{0\ne f\in W_0^{1,p}(K)}
       \frac{\int_K H(\nabla f)^p}{\int_K|f|^p}.
\end{equation}
Norm equivalence and the Dirichlet Poincar\'e inequality make this
quantity positive and finite for every nonempty bounded open \(K\).
With \(K_t=(1-t)K_0+tK_1\), we need
\begin{equation}\label{eq:eigenvalue-convexity}
 \lambda_{p,H}(K_t)
 \le(1-t)\lambda_{p,H}(K_0)+t\lambda_{p,H}(K_1).
\end{equation}
This follows from Wang and Xia's inverse-power
inequality~\cite[Theorem~1.2]{WX11}, with their integrand \(F=H\)
and the same Rayleigh normalization. For the stated strongly convex
class, Proposition~\ref{prop:eigenvalue-convexity} gives a proof using
the eigenfunction results of Mosconi, Riey, and
Squassina~\cite[Theorem~1.1 and Proposition~4.5]{MRS24}.
It handles critical points by a weak cutoff argument and removes smooth
boundary assumptions by inner exhaustion. No differentiability of
\(H^2\) at zero is required.

We claim that
\begin{equation}\label{eq:p-limit}
 \lim_{p\downarrow1}\lambda_{p,H}(K)=h_H(K).
\end{equation}
First, smooth compactly supported probability densities suffice for
the infimum in~\eqref{eq:cheeger}. Choose \(x_0\in K\) and
\(\delta_0>0\) with \(B(x_0,\delta_0)\subset K\). For \(0<r<1\),
convexity gives
\[
 K_r:=x_0+r(\overline K-x_0)\Subset K,
 \qquad \operatorname{dist}(K_r,K^c)\ge(1-r)\delta_0.
\]
For a feasible density \(\rho\), contraction gives a probability density
supported in \(K_r\), with energy \(r^{-1}E_H(\rho)\), namely
\[
 \rho_r(x)=r^{-d}\rho\!\left(x_0+\frac{x-x_0}{r}\right).
\]
Convolve its global zero extension
with a nonnegative smooth kernel of mass one and radius less than
\((1-r)\delta_0\). The result belongs to \(C_c^\infty(K)\), has
mass one, and has energy at most \(E_H(\rho_r)\), by convexity and
one-homogeneity of \(H\). Letting \(r\uparrow1\) proves the assertion.
This approximation includes the derivative measure on the support boundary.

For \(0\ne f\in W_0^{1,p}(K)\), its zero extension has
\(|f|^p\in W^{1,1}(\R^d)\). Testing with
\(\rho=|f|^p/\int|f|^p\), the chain rule and H\"older give
\[
 h_H(K)
 \le p\left(\frac{\int H(\nabla f)^p}{\int|f|^p}\right)^{1/p}.
\]
Consequently,
\[
 \lambda_{p,H}(K)\ge\bigl(h_H(K)/p\bigr)^p,
 \qquad
 \liminf_{p\downarrow1}\lambda_{p,H}(K)\ge h_H(K).
\]
For the upper limit, fix \(\delta>0\) and then choose a nonnegative
\(f_\delta\in C_c^\infty(K)\) with mass one and
\(E_H(f_\delta)\le h_H(K)+\delta\). This same function is a
Rayleigh test for every \(p>1\), and dominated convergence gives
\[
 \begin{aligned}
 \limsup_{p\downarrow1}\lambda_{p,H}(K)
 &\le\lim_{p\downarrow1}
       \frac{\int_K H(\nabla f_\delta)^p}{\int_K f_\delta^p}\\
 &= E_H(f_\delta)\le h_H(K)+\delta.
 \end{aligned}
\]
Let \(\delta\downarrow0\). This proves~\eqref{eq:p-limit}; applying it
to \(K_0,K_1,K_t\) in the eigenvalue inequality gives~\eqref{eq:cheeger-convexity}.

For the mixture~\eqref{eq:mixture} and \(\epsilon>0\), set
\[
 H_\epsilon(\xi)=
 \sum_\ell\alpha_\ell
 \sqrt{(u_\ell\cdot\xi)^2+\epsilon^2\|\xi\|_2^2}.
\]
Each ellipsoidal norm in this sum is smooth away from zero and has
nonnegative Hessian with kernel exactly the radial direction.
Since the nonnegative weights sum to one, \(D^2H_\epsilon(\xi)\)
is positive semidefinite with kernel \(\operatorname{span}\{\xi\}\)
for every \(\xi\ne0\). In
\[
 D^2(H_\epsilon^2)
 =2\nabla H_\epsilon\otimes\nabla H_\epsilon
   +2H_\epsilon D^2H_\epsilon,
\]
the second term is positive on every nonradial vector. On radial vectors
the first term is positive, by
\(\nabla H_\epsilon(\xi)\cdot\xi=H_\epsilon(\xi)>0\).
Thus \(D^2(H_\epsilon^2)\) is positive definite away from zero.
These are precisely the smooth, strongly convex integrands covered by
Proposition~\ref{prop:eigenvalue-convexity} and the preceding limit argument.
Moreover,
\[
 H\le H_\epsilon\le H+\epsilon\|\cdot\|_2.
\]
For \(\delta>0\), fix \(\rho_\delta\in\Prob(K)\) with
\(E_H(\rho_\delta)\le h_H(K)+\delta\). Then
\[
 h_H(K)\le h_{H_\epsilon}(K)
 \le h_H(K)+\delta+\epsilon|D\rho_\delta|(\R^d).
\]
First let \(\epsilon\downarrow0\) with this density fixed, and then
let \(\delta\downarrow0\). It follows that
\(h_{H_\epsilon}(K)\to h_H(K)\). Passing to this limit at
\(K_0,K_1,K_t\) proves~\eqref{eq:cheeger-convexity}, even if the
directions do not span \(\R^d\). No uniform BV bound for minimizers
of the degenerating energies is required.
\end{proof}

The next lemma recovers one density from bounds on all finite weighted
infima. Attainment of the individual weighted infima is not required:
separation followed by compactness supplies exact simultaneous
directional bounds.

\begin{lemma}[A common density]\label{lem:common}
Let \(K\subset\R^d\) be nonempty, bounded, open, and convex.
Let \(\kappa\ge0\).
If \(h_H(K)\le\kappa\) for every mixture~\eqref{eq:mixture}, then
there is \(\rho\in\Prob(K)\) satisfying~\eqref{eq:cap}.
\end{lemma}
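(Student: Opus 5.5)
We argue by convex separation in a minimax form, followed by compactness. Fix \(\delta>0\) and first work with finitely many directions. Let \(u_1,\ldots,u_N\) be unit vectors, and consider the set
\[
 A=\bigl\{(V_{u_1}(\rho),\ldots,V_{u_N}(\rho))+w:\rho\in\Prob(K),\ V_{u_\ell}(\rho)<\infty,\ w\in[0,\infty)^N\bigr\}\subset\R^N.
\]
Since \(\Prob(K)\) is convex and each \(V_u\) is convex (Lemma~\ref{lem:bv}), \(A\) is convex.

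We claim that \(A\) meets the closed box \([0,\kappa+\delta]^N\). If not, the convex set \(A\) and the compact convex box are disjoint and can be strictly separated. Since \(A\) is upward closed, the separating functional can be taken with nonnegative coefficients \(\alpha_\ell\), normalized to sum to one. This gives
\[
 \sum_\ell\alpha_\ell V_{u_\ell}(\rho)>\kappa+\delta
\]
for every \(\rho\in\Prob(K)\). In other words, \(h_H(K)\ge\kappa+\delta>\kappa\) for the mixture~\eqref{eq:mixture} with these weights. This contradicts the hypothesis. Hence for every finite set of unit directions there is \(\rho\in\Prob(K)\) with \(V_{u_\ell}(\rho)\le\kappa+\delta\) for all \(\ell\).

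Next we pass from finite families to all directions. Enumerate a countable dense set \(\{u_\ell\}\) of the unit sphere containing the coordinate vectors \(e_1,\ldots,e_d\). For each \(n\), apply the finite step with \(\delta=1/n\) to the directions \(u_1,\ldots,u_n\), obtaining a density \(\rho_n\). For large \(n\) the coordinate variations of \(\rho_n\) are bounded by \(\kappa+1\), and all supports lie in the bounded set \(K\). Lemma~\ref{lem:bv} therefore yields an \(L^1\)-convergent subsequence with limit \(\rho\).

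We check the properties of the limit.
\begin{itemize}
 \item \(\rho\ge0\), \(\int\rho=1\), and \(\rho=0\) a.e.\ off \(K\), since all of these pass to \(L^1\) limits.
 \item Lower semicontinuity of each \(V_{u_\ell}\) gives \(V_{u_\ell}(\rho)\le\kappa\) for every \(\ell\).
 \item \(\rho\in BV\), because \(|D\rho|(\R^d)\le\sum_iV_{e_i}(\rho)\) is finite.
\end{itemize}
Finally we extend the bound to every direction. For \(\rho\in BV\), the map \(u\mapsto V_u(\rho)=\int|u\cdot\sigma_\rho|\dd|D\rho|\) is Lipschitz, with constant \(|D\rho|(\R^d)\). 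Density of \(\{u_\ell\}\) then gives \(V_u(\rho)\le\kappa\) for every unit \(u\), and one-homogeneity gives~\eqref{eq:cap} for every \(u\in\R^d\).

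The main obstacle is the separation step. One must make sure that separating from the box really yields nonnegative weights, which follows because \(A\) is upward closed, and that strict separation holds, which follows because the box is compact. It is also necessary to discard densities with infinite variation; doing so does not affect the infimum \(h_H(K)\). The slack \(\delta\) is what makes strict separation available without knowing that any weighted infimum is attained.
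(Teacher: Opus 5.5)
Your argument follows the paper's route, and everything after the finite-direction step is correct. Separation over finitely many directions produces mixture weights. Compactness from Lemma~\ref{lem:bv}, lower semicontinuity, and the bound \(|V_u(\rho)-V_w(\rho)|\le\|u-w\|_2\,|D\rho|(\R^d)\) do the rest. The paper first attains the finite-direction minimum exactly and then takes a second limit; you instead fold the slack \(\delta=1/n\) into a single diagonal limit. That is a harmless variation.

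The step that fails as justified is the strict separation. Strict separation of a compact convex set from a disjoint convex set requires the latter to be closed; think of an open half-plane and a point on its boundary. Your \(A\) need not be closed. Take \(K\) the unit disk in \(\R^2\), \(N=1\), \(u_1=e_1\), and let \(m\) be the \(y\)-marginal of \(\rho\in\Prob(K)\). Each horizontal slice lies on a chord of length \(2(1-y^2)^{1/2}\) and has variation at least twice its essential supremum. Slicing therefore gives \(V_{e_1}(\rho)\ge\int m(y)(1-y^2)^{-1/2}\dd y>1\) for every \(\rho\). Thin centered rectangles bring \(V_{e_1}\) down to \(1\), so \(A=(1,\infty)\).

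The repair is short. Since \(A\subset[0,\infty)^N\), missing the box means missing the open set \((-\infty,\kappa+\delta)^N\). The non-strict separation theorem for a convex set and a disjoint open convex set then applies; this is what the paper uses with \((-\infty,a)^N\). It still gives nonnegative normalized weights with \(\inf_{\rho}\sum_\ell\alpha_\ell V_{u_\ell}(\rho)\ge\kappa+\delta>\kappa\), which contradicts the hypothesis. Two other fixes also work. You can separate \(\overline A\) strictly from the smaller box \([0,\kappa+\delta/2]^N\), which it misses. Or you can include the coordinate basis in every list, so that compactness and lower semicontinuity make \(A\) closed.
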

\begin{proof}
Fix finitely many unit directions \(u_1,\ldots,u_N\) including the
coordinate basis. Including this basis will turn bounds on the listed
directions into the BV bound
\[
 |D\rho|(\R^d)\le\sum_{i=1}^dV_{e_i}(\rho)
 \le d\max_{1\le\ell\le N}V_{u_\ell}(\rho).
\]
Its dimension dependence is used only to obtain compactness in the
fixed space \(\R^d\), not in the eventual directional bound.
The set
\[
 S=\{z\in\R^N:\exists\rho\in\Prob(K),\
       z_\ell\ge V_{u_\ell}(\rho)\ (1\le\ell\le N)\}
\]
is nonempty, convex, and upward closed in every coordinate.
If \(\inf_{\rho\in\Prob(K)}\max_\ell V_{u_\ell}(\rho)>\kappa\),
choose \(a\) strictly between these values. The convex sets \(S\)
and \(Q=(-\infty,a)^N\) are disjoint, and \(Q\) is open.
The separation theorem for a convex set and a disjoint open convex set
provides a nonzero linear functional \(\beta\) with
\[
 \inf_{z\in S}\beta\cdot z\ge\sup_{q\in Q}\beta\cdot q.
\]
Since \(S+\tau e_\ell\subset S\) for all \(\tau\ge0\), each
\(\beta_\ell\) must be nonnegative. Divide by
\(\sum_\ell\beta_\ell>0\) to obtain weights \(\alpha_\ell\).
The supremum over \(Q\) is now \(a\), so
\[
 \inf_{\rho\in\Prob(K)}
       \sum_\ell\alpha_\ell V_{u_\ell}(\rho)\ge a>\kappa,
\]
contrary to the hypothesis. Thus the finite-direction infimum is at
most \(\kappa\).

A minimizing sequence for the finite maximum has bounded coordinate
variations. Lemma~\ref{lem:bv} gives a subsequence converging in
\(L^1\) to \(\rho\). Nonnegativity and mass one pass to the limit,
and the support condition follows directly from
\[
 \int_{K^c}|\rho|
 \le\|\rho-\rho_j\|_1\longrightarrow0.
\]
The coordinate bounds also give \(\rho\in BV(\R^d)\).
Lower semicontinuity of the finitely many variations shows that the
minimum is attained, with every listed variation at most \(\kappa\).

Now take increasing finite lists containing the coordinate basis and
a countable dense set of unit directions. The corresponding minimizing
densities satisfy \(|D\rho|(\R^d)\le d\kappa\).
Another application of compactness gives a single limit density
in \(\Prob(K)\). Each fixed direction from the countable list occurs
in all sufficiently late lists, so lower semicontinuity preserves its
bound along this one subsequence. Finally,
\[
 V_u(\rho)\le V_w(\rho)+\|u-w\|_2\,|D\rho|(\R^d)
\]
extends the bound to all unit \(u\), and homogeneity gives~\eqref{eq:cap}.
\end{proof}

We now pass from averaged horizontal variation bounds to a prescribed section.

\begin{lemma}[Prescribed sections]\label{lem:section}
Let \(B\subset\R^d\times\R\) be nonempty, bounded, open, and convex,
and invariant under \((y,s)\mapsto(y,-s)\). Put
\(D_t=\{y:(y,t)\in B\}\), \(t\ge0\).
Let \(\kappa\ge0\). Suppose \(R\in L^1(\R^{d+1})\) is a probability density, vanishes
almost everywhere outside \(B\), and satisfies
\[
 V_{(u,0)}(R)\le\kappa
 \qquad(\|u\|_2=1).
\]
If \(a\ge0\) and \(\int|s|R\ge a\), then
\(D_a\) is nonempty and admits \(\rho\in\Prob(D_a)\)
satisfying~\eqref{eq:cap}.
\end{lemma}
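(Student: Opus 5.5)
The plan is to verify the hypothesis of Lemma~\ref{lem:common} on \(D_a\). That is, for every mixture \(H\) as in~\eqref{eq:mixture} we show \(h_H(D_a)\le\kappa\), and then Lemma~\ref{lem:common} produces the common density. Fix such an \(H\) and let \(T=\sup\{|s|:(y,s)\in B\}\). For \(|t|<T\) the section \(D_{|t|}\) is a nonempty bounded open convex set, by openness, convexity and the reflection symmetry of \(B\). Define \(\varphi(t)=h_H(D_{|t|})\) on \((-T,T)\); it is finite, since uniform densities of small balls are admissible.

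\textbf{Step 1: geometry of \(\varphi\).} Convexity of \(B\) gives \((1-\lambda)D_{t_0}+\lambda D_{t_1}\subseteq D_{(1-\lambda)t_0+\lambda t_1}\) for sections at signed heights, and \(D_{-t}=D_t\) by symmetry. Domain monotonicity and Lemma~\ref{lem:minkowski} then show that \(\varphi\) is convex on \((-T,T)\). Symmetry gives \(D_t\subseteq D_0\), and convexity then gives \(D_{t'}\subseteq D_t\) for \(0\le t\le t'\). Hence \(\varphi\) is even and nondecreasing in \(|t|\).

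\textbf{Step 2: slicing the energy.} Write \(R_s=R(\cdot,s)\) and \(m(s)=\int R_s\), so that \(\int m=1\). Since \(B\) is open, \(|s|<T\) almost everywhere on \(\{R>0\}\). For almost every \(s\) with \(m(s)>0\), the normalized slice \(R_s/m(s)\) is a probability density vanishing almost everywhere outside \(D_{|s|}\). The key inequality is
\[
 \int_{\R}V_u(R_s)\dd s\le V_{(u,0)}(R)\qquad(\|u\|_2=1).
\]
I would prove it from the translation characterization~\eqref{eq:translation} in Lemma~\ref{lem:bv}. By Fubini, \(\|R(\cdot+hu,\cdot)-R\|_1=\int\|R_s(\cdot+hu)-R_s\|_1\dd s\). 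Dividing by \(|h|\) and applying Fatou as \(h\to0\) gives the inequality; it also shows that almost every slice lies in \(BV\) in each listed direction. Summing with the weights \(\alpha_\ell\) yields
\[
 \kappa\ge\int m(s)\,E_H\bigl(R_s/m(s)\bigr)\dd s\ge\int m(s)\varphi(s)\dd s.
\]
Here some care is needed, since \(h_H\) is defined over full-\(BV\) densities. I would either note that only the listed directional variations enter \(E_H\) for a mixture, or include the coordinate directions with zero weight to secure finiteness of the full gradient for almost every \(s\). This is the main technical obstacle: the measurability and \(BV\) bookkeeping for slices.

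\textbf{Step 3: Jensen and monotonicity.} Set \(\bar s=\int m(s)|s|\dd s=\int|s|R\). Since \(|s|<T\) almost everywhere on \(\{R>0\}\), we have \(a\le\bar s<T\), so \(D_a\supseteq D_{\bar s}\neq\varnothing\). Jensen's inequality for the convex even function \(\varphi\) on \([0,T)\), applied with the probability weights \(m\) to the variable \(|s|\), gives \(\varphi(\bar s)\le\int m\,\varphi(|s|)\le\kappa\). Monotonicity then gives \(h_H(D_a)=\varphi(a)\le\varphi(\bar s)\le\kappa\). Since \(H\) was an arbitrary mixture, Lemma~\ref{lem:common} yields \(\rho\in\Prob(D_a)\) satisfying~\eqref{eq:cap}.
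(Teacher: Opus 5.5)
Your proposal is correct and follows the paper's proof essentially step for step. The shared steps are the slicing inequality \(\int V_u(R_s)\dd s\le V_{(u,0)}(R)\) from the difference-quotient formula, Fubini and Fatou; convexity and monotonicity of \(t\mapsto h_H(D_t)\) from domain monotonicity and Lemma~\ref{lem:minkowski}; Jensen at the mean absolute height \(\int|s|R<T\); and the final appeal to Lemma~\ref{lem:common}.

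Of your two options for the BV bookkeeping, use the second, which is what the paper does. Apply the slicing bound to the coordinate directions to get \(R_s\in BV(\R^d)\) for almost every \(s\). The mixture directions alone need not make the full gradient a finite measure, and membership in \(\Prob(D_{|s|})\) requires that.
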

\begin{proof}
The projection of \(B\) on the last coordinate is an interval
\((-b,b)\), with \(0<b<\infty\). Thus precisely the sections with
\(0\le t<b\) are nonempty.
Reflection and convexity give \(D_t\subseteq D_s\) for
\(0\le s\le t<b\), and
\[
 (1-\theta)D_s+\theta D_t
 \subseteq D_{(1-\theta)s+\theta t}
 \qquad(0\le s,t<b,\ 0\le\theta\le1).
\]
For a fixed mixture \(H\), domain monotonicity gives that
\(g(t):=h_H(D_t)\) is nondecreasing. The same monotonicity and
Lemma~\ref{lem:minkowski} give
\[
 \begin{aligned}
 h_H(D_{(1-\theta)s+\theta t})
 &\le h_H((1-\theta)D_s+\theta D_t)\\
 &\le(1-\theta)h_H(D_s)+\theta h_H(D_t).
 \end{aligned}
\]
Hence \(g\) is also convex on \([0,b)\). It is finite there, since
each nonempty section admits a smooth compactly supported probability
density.

For almost every \(s\), let \(R_s=R(\cdot,s)\) and
\(q(s)=\int_{\R^d}R(y,s)\dd y\). For each fixed direction \(u\),
the difference-quotient formula~\eqref{eq:translation}, Fubini, and
Fatou give
\begin{equation}\label{eq:horizontal-slicing}
 \begin{aligned}
 \int_\R V_u(R_s)\dd s
 &=\int_\R\lim_{h\to0}
        \frac{\|R_s(\cdot+hu)-R_s\|_1}{|h|}\dd s\\
 &\le\liminf_{h\to0}
        \frac{\|R(\cdot+(hu,0))-R\|_1}{|h|}
 =V_{(u,0)}(R).
 \end{aligned}
\end{equation}
Intersecting the full-measure sets for the finitely many coordinate
directions gives \(R_s\in BV(\R^d)\) for almost every \(s\).
Fubini, the support condition, and the reflection symmetry of \(B\)
then give \(R_s/q(s)\in\Prob(D_{|s|})\) for almost every \(s\)
with \(q(s)>0\). If \(q(s)=0\), nonnegativity gives \(R_s=0\)
almost everywhere and its energy contributes zero.

The measure \(q(s)\dd s\) is a probability measure concentrated on
\((-b,b)\). Its mean absolute height \(M=\int|s|q(s)\dd s\)
satisfies \(M\ge a\), and
\[
 b-M=\int_{-b}^b(b-|s|)q(s)\dd s>0.
\]
Thus \(D_M\) and \(D_a\) are nonempty, including when \(M=a\).
Normalization of a slice divides its energy by \(q(s)\); its
contribution is therefore weighted by that slice's mass. For the fixed
mixture \(H\), this yields
\[
 \int_{-b}^b q(s)h_H(D_{|s|})\dd s
 \le\sum_\ell\alpha_\ell\int_\R V_{u_\ell}(R_s)\dd s
 \le\kappa.
\]
This estimate makes \(g(|s|)\) integrable with respect to
\(q(s)\dd s\). Applying Jensen under the distribution of
\(|s|\) induced by \(q(s)\dd s\), and then using its monotonicity, gives
\[
 \kappa\ge\int_{-b}^b q(s)h_H(D_{|s|})\dd s
 \ge h_H(D_M)\ge h_H(D_a).
\]
This holds for every mixture \(H\).
Lemma~\ref{lem:common} now supplies the required density on \(D_a\).
\end{proof}

\begin{remark}
Only the region \(B\) must be reflection symmetric. Neither \(R\)
nor its marginal \(q\) is assumed symmetric, and no vertical derivative
of \(R\) is used. The density obtained on \(D_a\) is a new common
density, not necessarily one of the normalized input slices.
\end{remark}

\section{Lifting and Finite-Step Stability}\label{sec:stability}

We now prove Proposition~\ref{prop:stability}.
The cutoff three makes the height-one section equal to the sign transform:
an open interval in \((-3,3)\) has length greater than two exactly when it
contains \([c-1,c+1]\) for some \(|c|<2\).

\begin{lemma}[The geometric lift]\label{lem:lift}
For nonempty bounded open convex \(K\subset\R^d\) and \(v\in\R^d\),
define
\[
 B=\{(y,s):|s|<3,\ y+sv\in K\}.
\]
Let \(\ell(y)\) be the length of its vertical fiber, zero for an empty
fiber. The Steiner symmetral
\[
 B^\star=\{(y,s):|s|<\ell(y)/2\}
\]
is nonempty, bounded, open, convex, and symmetric in \(s\). Its
height-one section is
\begin{equation}\label{eq:section-transform}
 \{y:\ell(y)>2\}=\Tv_vK.
\end{equation}
\end{lemma}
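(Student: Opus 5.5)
We first describe the fibers of \(B\), then check the properties of \(B^\star\), and finally match the height-one section with \(\Tv_vK\).

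\emph{Fibers of \(B\).} For fixed \(y\), the vertical fiber is
\[
 F(y)=\{s\in(-3,3):y+sv\in K\}.
\]
It is the intersection of \((-3,3)\) with the preimage of the open convex set \(K\) under the affine map \(s\mapsto y+sv\). Hence it is an open interval, possibly empty, with length \(\ell(y)\le6\).

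\emph{Properties of \(B\) and \(B^\star\).} The set \(B\) is the preimage of \(K\times(-3,3)\) under the linear bijection \((y,s)\mapsto(y+sv,s)\). It is therefore open and convex. It is bounded because \(|s|<3\) and \(y\in K-sv\). It is nonempty since \(K\times\{0\}\subset B\).

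For \(B^\star\) we use the standard Steiner facts in this simple setting.
\begin{itemize}
\item \emph{Concavity of \(\ell\).} If \(y_0,y_1\) have fibers \((a_i,b_i)\), then convexity of \(B\) puts the fiber over \((1-\theta)y_0+\theta y_1\) around the interval \(((1-\theta)a_0+\theta a_1,(1-\theta)b_0+\theta b_1)\). So \(\ell\) is concave on the projection \(\pi(B)\), which is open and convex.
\item \emph{Lower semicontinuity of \(\ell\).} Openness of \(B\) gives this: if \((y,s)\in B\), then nearby \(y'\) still have fibers containing nearly the same interval. A concave function on an open convex set is in fact continuous.
\item \emph{Conclusions.} By lower semicontinuity, \(B^\star=\{(y,s):\ell(y)>2|s|\}\) is open. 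By concavity of \(\ell\), the set \(B^\star\) is convex, since it is the strict hypograph-type set of \(\ell/2\) together with its reflection. It is bounded because \(\pi(B^\star)\subset\pi(B)\) and \(\ell\le6\). It is nonempty because \(\ell>0\) on \(K\). Symmetry in \(s\) is immediate from the definition.
\end{itemize}
The height-one section of \(B^\star\) is \(\{y:\ell(y)>2\}\) by definition.

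\emph{Identification with \(\Tv_vK\).} This is the only real step, and the expected obstacle is getting the strict inequalities and the cutoff at \(3\) right. If \(v=0\), both sides equal \(K\) when \(K\ne\varnothing\): every fiber is \((-3,3)\) with length \(6>2\), and \(\Tv_0K=K\). So assume \(v\ne0\). We use the fact that an open interval \(I\subset(-3,3)\) has length greater than \(2\) if and only if \([c-1,c+1]\subset I\) for some \(|c|<2\).
\begin{itemize}
\item \emph{Forward direction.} If \(|I|>2\), take \(c\) to be the midpoint of \(I\). Then \([c-1,c+1]\subset I\), and \(|c|<3-1=2\).
\item \emph{Converse.} If \([c-1,c+1]\subset I\) with \(I\) open, then \(|I|>2\).
\end{itemize}
Now \(y\in\Tv_vK\) means \(y=z+tv\) with \(z\pm v\in K\) and \(|t|<2\). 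Put \(c=-t\). Then \(y+(c\pm1)v=z\pm v\in K\), and \(|c\pm1|<3\), so \(c\pm1\in F(y)\). Since \(F(y)\) is an interval, it contains \([c-1,c+1]\), and hence \(\ell(y)>2\).

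Conversely, suppose \(\ell(y)>2\). Pick \(c\) with \(|c|<2\) and \([c-1,c+1]\subset F(y)\). Set \(t=-c\) and \(z=y+cv\). Then \(z\pm v=y+(c\pm1)v\in K\), so \(y=z+tv\in\Tv_vK\). This proves \eqref{eq:section-transform}.
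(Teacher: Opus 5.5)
Your proof is correct and follows essentially the same route as the paper: the Minkowski inclusion of vertical fibers gives concavity of \(\ell\) and hence the properties of \(B^\star\), and the section is identified by taking a length-two subinterval of the fiber centered at some \(|c|<2\) (your midpoint choice is one admissible choice), with \(z=y+cv\) and \(t=-c\). Two small points: the separate case \(v=0\) is unnecessary because your general argument never uses \(v\ne0\); and in the convexity step, \(B^\star\) is the \emph{intersection}, over \(\pi(B)\), of the strict hypograph of \(\ell/2\) with its reflection, not a union.
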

\begin{proof}
The lift \(B\) is nonempty, bounded, open, and convex.
Its nonempty vertical fibers \(J_y\) are intervals, and
\[
 (1-\theta)J_{y_0}+\theta J_{y_1}
 \subseteq J_{(1-\theta)y_0+\theta y_1}.
\]
Their lengths form a positive concave function on the open convex
projection of \(B\), hence are continuous there. This proves the stated
properties of \(B^\star\).

If \(J_y=(\alpha,\beta)\) has length greater than two, choose
\(c\in(\alpha+1,\beta-1)\subseteq(-2,2)\).
Here \(c\) is the center of a length-two segment in the vertical
fiber, and \(z=y+cv\) is its corresponding midpoint in \(K\).
Thus \(z\pm v\in K\), so \(y\in\Tv_vK\).
Conversely, if \(y=z+tv\), \(|t|<2\), and \(z\pm v\in K\),
then \(-t-1,-t+1\in J_y\). Openness gives \(\ell(y)>2\).
This also covers \(v=0\).
\end{proof}

The next lemma relates the geometric symmetral \(B^\star\) to fiberwise
density rearrangement by showing how support is preserved.

\begin{lemma}[Fiber rearrangement]\label{lem:rearrangement}
For nonnegative \(R\in L^1(\R^d\times\R)\), put
\begin{equation}\label{eq:fiber-rearrangement}
 \mu(y,t)=\int_\R\mathbf1_{\{R(y,s)>t\}}\dd s,
 \qquad
 R^\star(y,s)=\int_0^\infty
                  \mathbf1_{\{2|s|<\mu(y,t)\}}\dd t,
\end{equation}
where \(t>0\), and set \(R^\star=0\) on the exceptional
nonintegrable fibers. The formula is understood almost everywhere,
with value zero chosen wherever it is infinite. Then \(R^\star\)
is jointly measurable, nonnegative, and has the same mass as \(R\).
If \(R=0\) almost everywhere outside the lift \(B\) in
Lemma~\ref{lem:lift}, then \(R^\star=0\) almost everywhere outside
\(B^\star\). For every \(u\in\R^d\) and \(h\in\R\),
\begin{equation}\label{eq:horizontal-contraction}
 \|R^\star(\cdot+(hu,0))-R^\star\|_1
 \le\|R(\cdot+(hu,0))-R\|_1.
\end{equation}
Consequently \(V_{(u,0)}(R^\star)\le V_{(u,0)}(R)\).
\end{lemma}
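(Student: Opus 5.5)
Fix \(y\) outside the null exceptional set. Then \(R^\star(y,\cdot)\) is the symmetric decreasing rearrangement of the one-variable function \(R(y,\cdot)\). Indeed, \(\mu(y,t)\) is the distribution function of \(R(y,\cdot)\). The superlevel set \(\{R^\star(y,\cdot)>t\}\) is the centered interval \((-\mu(y,t)/2,\mu(y,t)/2)\), up to the countably many levels where \(\mu\) jumps; layer cake handles those. The plan therefore reduces every claim to one-dimensional rearrangement facts applied fiberwise, together with measurability bookkeeping.

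\textbf{Measurability and mass.} The set \(\{(y,s,t):R(y,s)>t\}\) is measurable in \(\R^{d}\times\R\times(0,\infty)\). Tonelli therefore makes \(\mu\) jointly measurable in \((y,t)\). The set \(\{(y,s,t):2|s|<\mu(y,t)\}\) is then measurable, and \(R^\star\) is measurable by another application of Tonelli. The exceptional set of nonintegrable fibers is measurable and null by Fubini, since \(R\in L^1\). For mass, Tonelli gives
\[
\int R^\star(y,s)\,ds=\int_0^\infty\mu(y,t)\,dt=\int R(y,s)\,ds .
\]
Integrating in \(y\) gives equal total mass.

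\textbf{Support.} For almost every \(y\), \(R(y,\cdot)\) vanishes almost everywhere outside the fiber \(J_y\), which has length \(\ell(y)\). Hence \(\mu(y,t)\le\ell(y)\) for all \(t>0\). It follows that \(R^\star(y,s)>0\) forces \(2|s|<\mu(y,t)\le\ell(y)\) for some \(t\), so \((y,s)\in B^\star\). Empty fibers have \(\ell=0\) and contribute nothing.

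\textbf{Contraction.} This is the main step. Write \(f=R(y+hu,\cdot)\) and \(g=R(y,\cdot)\). I would use the one-dimensional nonexpansivity of symmetric decreasing rearrangement in \(L^1\), namely \(\|f^\star-g^\star\|_1\le\|f-g\|_1\). To prove it I would use layer cake. For nonnegative functions,
\[
\|f-g\|_1=\int_0^\infty|\{f>t\}\,\triangle\,\{g>t\}|\,dt .
\]
For each \(t\), the symmetric difference of the rearranged level sets has measure \(|\mu_f(t)-\mu_g(t)|\), because they are centered intervals. This is at most \(|\{f>t\}\triangle\{g>t\}|\), since \(||A|-|B||\le|A\triangle B|\). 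Integrating in \(t\) gives the one-dimensional inequality.

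Apply this with \(f=R(y+hu,\cdot)\) and \(g=R(y,\cdot)\). The rearrangement commutes with the horizontal translation, because \(R^\star(y+hu,\cdot)=(R(y+hu,\cdot))^\star\) fiberwise. Integrating over \(y\) yields \eqref{eq:horizontal-contraction}. The exceptional fibers form a null set, so they do not affect the integrals; the translated exceptional set is also null.

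Finally, divide \eqref{eq:horizontal-contraction} by \(|h|\) and let \(h\to0\). The limit formula in \eqref{eq:translation}, applied in \(\R^{d+1}\) with direction \((u,0)\), gives \(V_{(u,0)}(R^\star)\le V_{(u,0)}(R)\). This includes the case where the right side is infinite.
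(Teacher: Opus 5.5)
Your proof is correct and follows essentially the paper's argument. Both use a fiberwise symmetric decreasing rearrangement, Tonelli for measurability and mass, the bound \(\mu(y,t)\le\ell(y)\) for the support, and one-dimensional \(L^1\)-nonexpansivity via layer cake, finished by the difference-quotient limit of Lemma~\ref{lem:bv}. The only difference is cosmetic: you write \(\|f-g\|_1\) as a level-set integral of symmetric differences, while the paper compares \(\int\min\{f^\star,g^\star\}\) with \(\int\min\{f,g\}\) and invokes mass preservation, and the two are equivalent via \(|A\triangle B|=|A|+|B|-2|A\cap B|\).
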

\begin{proof}
Tonelli makes \(\mu\) and the function defined by the integral formula
measurable, and gives
\[
 \int_{\R^{d+1}}R^\star
 =\int_{\R^d}\int_0^\infty\mu(y,t)\dd t\dd y
 =\int_{\R^{d+1}}R.
\]
The integral formula is therefore finite almost everywhere, so the
chosen finite-valued representative has the same mass. For almost every
\(y\), the function \(\mu(y,\cdot)\) is decreasing and right-continuous.
Up to null sets, the superlevel sets of \(R^\star(y,\cdot)\) at positive
levels are centered intervals of the same lengths as those of
\(R(y,\cdot)\).
Under the support hypothesis, for almost every \(y\) and every \(t>0\),
the original superlevel set is contained, up to a null set, in \(J_y\).
Hence \(\mu(y,t)\le\ell(y)\), and~\eqref{eq:fiber-rearrangement}
vanishes when \(2|s|\ge\ell(y)\).

For nonnegative \(f,g\in L^1(\R)\), layer cake and the maximal
intersection of centered intervals give
\[
 \int\min\{f^\star,g^\star\}
 =\int_0^\infty\min\{|\{f>t\}|,|\{g>t\}|\}\dd t
 \ge\int\min\{f,g\}.
\]
Together with mass preservation and
\(\|f-g\|_1=\int f+\int g-2\int\min\{f,g\}\), this gives
\(\|f^\star-g^\star\|_1\le\|f-g\|_1\).
Apply it to the fibers at \(y+hu\) and \(y\), and integrate in \(y\).
This proves~\eqref{eq:horizontal-contraction}; the difference-quotient
limit in Lemma~\ref{lem:bv} proves the horizontal variation bound.
\end{proof}

The next identity computes the height retained by this rearrangement.

\begin{lemma}[Rearranged height]\label{lem:height}
Let \(\rho\in BV(\R^d)\) be a probability density and \(v\in\R^d\).
Set
\begin{equation}\label{eq:lifted-density}
 R(y,s)=\frac16\rho(y+sv)\mathbf1_{\{|s|<3\}},
\end{equation}
and rearrange \(R(y,\cdot)\) symmetrically and decreasingly for almost every \(y\),
obtaining \(R^\star\). Then
\begin{equation}\label{eq:height-identity}
 \begin{aligned}
 \int|s|R^\star
 &=\frac32-\frac1{24}\int_0^6(6-h)
       \|\rho-\rho(\cdot+hv)\|_1\dd h\\
 &\ge\frac32-\frac32 V_v(\rho).
 \end{aligned}
\end{equation}
\end{lemma}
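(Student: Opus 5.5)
The plan is to compute the height fiber by fiber through the layer-cake formula, which turns it into a double integral of pointwise minima. That double integral then becomes an average of $L^1$ translation differences of $\rho$. The inequality follows from the translation bound in Lemma~\ref{lem:bv}.

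\textbf{Step 1: one fiber.} Fix $y$ in the full-measure set of $y$ for which $f=R(y,\cdot)$ is integrable, and let $\mu(t)=|\{f>t\}|$. By~\eqref{eq:fiber-rearrangement} and Tonelli,
\[
 \int_\R|s|R^\star(y,s)\dd s
 =\int_0^\infty\int_\R|s|\mathbf1_{\{2|s|<\mu(t)\}}\dd s\dd t
 =\frac14\int_0^\infty\mu(t)^2\dd t .
\]
Expanding $\mu(t)^2=\iint\mathbf1_{\{f(s)>t\}}\mathbf1_{\{f(s')>t\}}\dd s\dd s'$ and integrating in $t$ first gives
\[
 \frac14\iint\min\{f(s),f(s')\}\dd s\dd s'.
\]
This integral is taken over $(-3,3)^2$, since $f$ vanishes outside $(-3,3)$.

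\textbf{Step 2: split the minimum.} Write $\min\{a,b\}=\tfrac12(a+b)-\tfrac12|a-b|$ and integrate over $y$.

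The symmetric part contributes $\tfrac14\cdot6\int R$. The mass is
\[
 \int R=\int_{-3}^3\tfrac16\int\rho(y+sv)\dd y\dd s=1
\]
by translation invariance, so this part equals $3/2$.

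For the absolute-value part, Fubini and the change of variables $y\mapsto y-sv$ give
\[
 \int\bigl|R(y,s)-R(y,s')\bigr|\dd y=\tfrac16\|\rho-\rho(\cdot+(s'-s)v)\|_1,
\]
which depends only on $h=s'-s$. The set of pairs in $(-3,3)^2$ with difference $h$ has linear density $6-|h|$ on $(-6,6)$. The integrand is even in $h$, because translation preserves the $L^1$ norm. The absolute-value part therefore equals
\[
 \frac18\cdot\frac16\cdot2\int_0^6(6-h)\|\rho-\rho(\cdot+hv)\|_1\dd h ,
\]
which is the stated term with coefficient $1/24$. This proves the identity in~\eqref{eq:height-identity}.

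\textbf{Step 3: the inequality.} By~\eqref{eq:translation}, $\|\rho-\rho(\cdot+hv)\|_1\le hV_v(\rho)$ for $h\ge0$. Since $\int_0^6(6-h)h\dd h=108-72=36$ and $36/24=3/2$, this gives $\int|s|R^\star\ge\tfrac32-\tfrac32V_v(\rho)$. If $V_v(\rho)=\infty$ the bound is trivial.

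\textbf{Expected obstacle.} I expect the main difficulty to be the measure-theoretic bookkeeping rather than the computation. Joint measurability of $R$ and $R^\star$ is needed, together with the almost-everywhere conventions of Lemma~\ref{lem:rearrangement}. Every integrand is nonnegative, or is bounded by integrable functions once split as above, so Tonelli and Fubini justify all interchanges. The case $v=0$ is consistent with the formula: both sides equal $3/2$, since the translation differences vanish.
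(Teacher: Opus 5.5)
Your proposal is correct and follows the paper's proof essentially step by step. The shared steps are: layer cake on each fiber to get $\frac14\int\mu^2$; expansion of $\mu^2$ into the double integral of pointwise minima over $(-3,3)^2$; the identity $\min\{a,b\}=\frac12(a+b)-\frac12|a-b|$, which the paper packages as the overlap $O(h)=1-\frac12\|\rho-\rho(\cdot+hv)\|_1$; the triangular weight $6-|h|$ together with evenness; and the translation bound with $\int_0^6 h(6-h)\,dh=36$. The only difference is cosmetic: you split the minimum pointwise before integrating in $y$, whereas the paper integrates in $y$ first to form $O(h)$, and your restriction to $(-3,3)^2$ before splitting correctly avoids any $\infty-\infty$ issue.
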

\begin{proof}
For a level \(t>0\), let
\(\mu(y,t)=|\{s\in(-3,3):\rho(y+sv)>6t\}|\), as
in~\eqref{eq:fiber-rearrangement}.
The corresponding superlevel interval of \(R^\star(y,\cdot)\)
has first absolute moment \(\mu(y,t)^2/4\).
Layer cake and Tonelli give
\[
 \begin{aligned}
 \int|s|R^\star(y,s)\dd y\dd s
 &=\frac14\int_{\R^d}\int_0^\infty\mu(y,t)^2\dd t\dd y\\
 &=\frac1{24}\int_{-3}^3\int_{-3}^3\int_{\R^d}
        \min\{\rho(y+sv),\rho(y+rv)\}\dd y\dd r\dd s.
 \end{aligned}
\]
The change of level variable \(\tau=6t\) contributes the factor \(1/6\).
For \(h\in\R\), the overlap
\begin{equation}\label{eq:overlap}
 O(h)=\int\min\{\rho(y),\rho(y+hv)\}\dd y
     =1-\frac12\|\rho-\rho(\cdot+hv)\|_1
\end{equation}
is even in \(h\). Integration over the square \((-3,3)^2\)
therefore turns the preceding expression into
\(\frac1{12}\int_0^6(6-h)O(h)\dd h\), proving the identity.
Only the probability-density hypothesis was needed for the exact identity.
By~\eqref{eq:translation}, the density difference is at most
\(hV_v(\rho)\). Since \(\int_0^6h(6-h)\dd h=36\),
this BV estimate gives the stated inequality.
\end{proof}

\begin{samepage} 
\begin{proof}[Proof of Proposition~\ref{prop:stability}]
Use the lift in Lemma~\ref{lem:lift} and the density
\(R\) in~\eqref{eq:lifted-density}. Its mass is one and it vanishes
almost everywhere outside \(B\), with
\(V_{(u,0)}(R)=V_u(\rho)\) by translation invariance
and~\eqref{eq:translation}.
Lemma~\ref{lem:rearrangement} gives a jointly measurable probability
density \(R^\star\), zero almost everywhere outside \(B^\star\), with
\begin{equation}\label{eq:variation-contraction}
 V_{(u,0)}(R^\star)\le V_u(\rho)\le\kappa\|u\|_2.
\end{equation} 

The initial uniform marginal on \((-3,3)\) has mean absolute height
\(3/2\). Lemma~\ref{lem:height} and~\eqref{eq:small-step} show that
rearrangement loses at most \(1/2\):
\[
 \int|s|R^\star\ge\frac32-\frac32\kappa\|v\|_2\ge1.
\]
Apply Lemma~\ref{lem:section} to \(B^\star,R^\star\) at height one,
using~\eqref{eq:variation-contraction}.
By~\eqref{eq:section-transform}, this section is \(\Tv_vK\).
It is nonempty and supports a probability density with the same
directional variation bound \(\kappa\).
\end{proof}
\end{samepage}

\begin{remark}[Normalization and the truncation height]\label{rem:height}
For \(v=0\), the identity gives \(\int|s|R^\star=3/2\).
For a one-dimensional equality example, take \(v=1\) and
\(\rho=(2A)^{-1}\mathbf1_{(-A,A)}\), where \(A\ge3\).
Then \(V_1(\rho)=1/A\) and
\(\|\rho-\rho(\cdot+h)\|_1=h/A\) for \(0\le h\le6\).
The mean absolute height is \(3/2-3/(2A)\); at \(A=3\), it equals
one and \(V_1(\rho)=1/3\).

The choice of cutoff three also has a geometric constraint.
Let \(K\) be nonempty, bounded, open, and convex, and take any
\(\rho\in\Prob(K)\). For \(L>1\), put
\(B_L=\{(y,s):|s|<L,\ y+sv\in K\}\), and replace~\eqref{eq:lifted-density} by
\(R_L(y,s)=(2L)^{-1}\rho(y+sv)\mathbf1_{\{|s|<L\}}\).
The same calculation gives
\[
 \int|s|R_L^\star
 =\frac L2-\frac1{8L}\int_0^{2L}(2L-h)
       \|\rho-\rho(\cdot+hv)\|_1\dd h
 \ge\frac L2-\frac{L^2}{6}V_v(\rho).
\]
The height-one section of the symmetrized lift is
\(((K-v)\cap(K+v))+\{tv:|t|<L-1\}\).
Containment in \((K-v)\cup(K+v)\) for every \(K,v\) requires
\(L\le3\): for \(K=(-A,A)\), \(A>1\), and \(v=1\), the section
is \((-A-L+2,A+L-2)\), whereas the union is \((-A-1,A+1)\).
The argument of Lemma~\ref{lem:containment} proves sufficiency.
For \(2<L\le3\), a sufficient condition for the mean absolute height
to be at least one is
\(V_v(\rho)\le3(L-2)/L^2\). The right-hand side increases with \(L\)
and equals \(1/3\) at \(L=3\).
This explains the parameter within the present uniform truncation and
linear translation estimate; it does not assert an optimal stability
threshold for other constructions.
\end{remark}

\appendix
\section{Eigenvalue Convexity for the Regularized Norms}\label{app:eigenvalue}

We give the eigenvalue argument needed in Lemma~\ref{lem:minkowski},
following the infimal-convolution method of
Wang and Xia~\cite[Section~3]{WX11}. The analytic inputs are the
eigenfunction results of Mosconi, Riey, and Squassina~\cite{MRS24}.
The differential comparison is first made away from critical gradients;
a cutoff then gives the global weak inequality, including the critical
set. Throughout this appendix, the norm and \(p>1\) are fixed.

\begin{proposition}\label{prop:eigenvalue-convexity}
Let \(F\) be a norm on \(\R^d\), smooth away from zero, with
\(D^2(F^2)\) positive definite there, and let \(1<p<\infty\).
For nonempty bounded open convex sets \(K_0,K_1\) and \(0\le t\le1\),
the eigenvalues defined by~\eqref{eq:eigenvalue} satisfy
\[
 \lambda_{p,F}((1-t)K_0+tK_1)
 \le(1-t)\lambda_{p,F}(K_0)+t\lambda_{p,F}(K_1).
\]
\end{proposition}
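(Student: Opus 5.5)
We will prove the stronger pointwise statement that a suitable positive function on \(K_t:=(1-t)K_0+tK_1\) is a subsolution of the eigenvalue equation with constant \(\Lambda:=(1-t)\lambda_{p,F}(K_0)+t\lambda_{p,F}(K_1)\). Testing this function in~\eqref{eq:eigenvalue} then gives \(\lambda_{p,F}(K_t)\le\Lambda\). The endpoint cases \(t\in\{0,1\}\) are trivial, so fix \(0<t<1\). Write \(a(\xi)=F(\xi)^{p-1}\nabla F(\xi)\), so that \(\operatorname{div}a(\nabla u)\) is the anisotropic \(p\)-Laplacian.

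\textbf{Step 1: eigenfunctions and logarithms.} From Mosconi--Riey--Squassina~\cite[Theorem~1.1 and Proposition~4.5]{MRS24}, take positive first eigenfunctions \(u_i\) on \(K_i\). They should be \(C^{1,\alpha}_{\rm loc}\), vanish on \(\partial K_i\), be smooth where \(\nabla u_i\neq0\), and be log-concave because \(K_i\) is convex. If log-concavity is not available in that form, one can first work on smooth strictly convex inner approximations and then use the inner-exhaustion step mentioned in Lemma~\ref{lem:minkowski}. Put \(v_i=-\log u_i\). Then \(v_i\) is convex on \(K_i\) and tends to \(+\infty\) at \(\partial K_i\). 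Using the evenness of \(F\), the equation \(-\operatorname{div}a(\nabla u_i)=\lambda_iu_i^{p-1}\) becomes
\[
 \operatorname{div}a(\nabla v_i)-(p-1)F(\nabla v_i)^p=\lambda_i .
\]
Following Wang--Xia, define the infimal convolution
\[
 v_t(x)=\inf\{(1-t)v_0(x_0)+tv_1(x_1):\ x=(1-t)x_0+tx_1\},
\]
and set \(u_t=e^{-v_t}\). The function \(v_t\) is convex, finite exactly on \(K_t\), and tends to \(+\infty\) at \(\partial K_t\). Hence \(u_t\) is log-concave and locally Lipschitz, and it extends continuously by zero to \(\partial K_t\).

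\textbf{Step 2: the pointwise inequality.} By coercivity the infimum defining \(v_t(x)\) is attained at some pair \((x_0,x_1)\). First-order optimality gives \(\nabla v_0(x_0)=\nabla v_1(x_1)=:\xi\), and \(\xi\) is then a supergradient of \(v_t\) at \(x\). The function \(\varphi(x+h)=(1-t)v_0(x_0+h)+tv_1(x_1+h)\) is \(C^2\) near \(h=0\) when \(\xi\ne0\), and it touches \(v_t\) from above at \(x\). A sharper choice is \(h_i=M_i h\) with \(M_i\) the harmonic-mean weights, but even the simple translation \(h_0=h_1=h\) suffices. It gives a test function with \(\nabla\varphi=\xi\) and \(D^2\varphi=(1-t)D^2v_0(x_0)+tD^2v_1(x_1)\). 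Since \(Da(\xi)\) is positive semidefinite,
\[
 \operatorname{tr}\bigl(Da(\xi)D^2\varphi\bigr)-(p-1)F(\xi)^p
 =(1-t)\lambda_0+t\lambda_1=\Lambda .
\]
Thus the smooth function \(e^{-\varphi}\) touches \(u_t\) from below at \(x\) and satisfies \(-\operatorname{div}a(\nabla e^{-\varphi})\le\Lambda e^{-(p-1)\varphi}\) at that point. This says \(u_t\) is a viscosity subsolution of \(-\operatorname{div}a(\nabla u)\le\Lambda u^{p-1}\) at every point with \(\xi\ne0\). The sign is right: the test function touches from below, which is what the subsolution definition requires.

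\textbf{Step 3: from viscosity to weak, and testing.} This is the main obstacle. Two things must be handled. First, the critical points \(\xi=0\), where \(Da\) degenerates. At such a point \(x\) is a minimum of the convex function \(v_t\), hence a maximum of \(u_t\), and for \(p\ge2\) the viscosity condition holds trivially there; for \(1<p<2\) we use the usual modified definition for singular operators. Second, converting the viscosity subsolution into a weak one. For this I would use the Juutinen--Lindqvist--Manfredi equivalence for the anisotropic operator, which is uniformly elliptic away from \(\xi=0\) because \(D^2(F^2)\) is positive definite. Failing that, I would use the cutoff the paper mentions: test with \(\eta_\delta(F(\nabla u_t))u_t\), where \(\eta_\delta\) vanishes near zero gradient, and let \(\delta\to0\). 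Once the weak inequality
\[
 \int_{K_t}a(\nabla u_t)\cdot\nabla\psi\le\Lambda\int_{K_t}u_t^{p-1}\psi
\]
holds for nonnegative \(\psi\in W_0^{1,p}(K_t)\), take \(\psi=u_t\). This is admissible because \(u_t\) is Lipschitz on compact subsets, vanishes continuously at \(\partial K_t\), and is approximable by truncations \((u_t-\epsilon)_+\). Then \(a(\xi)\cdot\xi=F(\xi)^p\) gives \(\int F(\nabla u_t)^p\le\Lambda\int u_t^p\), and \(u_t\not\equiv0\), so~\eqref{eq:eigenvalue} yields the claim.
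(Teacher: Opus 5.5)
Your Steps~1--2 are sound and use the same mechanism as the paper. The translated pair gives \(v_t\le\varphi\) near \(x\) with equality at \(x\), which is exactly the arithmetic-mean Hessian bound the paper extracts from \(D^2w_\eta=((1-t)B_0^{-1}+tB_1^{-1})^{-1}\le(1-t)B_0+tB_1\). One correction: viscosity subsolutions are tested from \emph{above}, not below. Your conclusion survives anyway, because any \(C^2\) function touching \(u_t\) from above at \(x\) has the same gradient as \(e^{-\varphi}\) and a larger Hessian, so degenerate ellipticity transfers the inequality.

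The genuine gap is Step~3. You give a menu of options rather than an argument, and neither option works as stated. The Juutinen--Lindqvist--Manfredi equivalence concerns the isotropic \(p\)-Laplace equation without lower-order terms. You would need a version for an anisotropic operator with the reaction term \(\Lambda u^{p-1}\) and the singular range \(1<p<2\), which you neither cite nor prove. The gradient cutoff \(\eta_\delta(F(\nabla u_t))\,u_t\) differentiates \(\nabla u_t\). Its error term therefore involves \(D^2u_t\) on \(\{\delta\le F(\nabla u_t)\le2\delta\}\), multiplied by \(|a(\nabla u_t)|\,|\eta_\delta'|\sim\delta^{p-2}\). Nothing in your argument makes this tend to zero, and for \(p<2\) the prefactor blows up.

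The paper avoids viscosity theory altogether. It adds \(\eta|x|^2\) to each \(v_i\), so the minimizing pair is unique and the implicit function theorem makes \(w_\eta\) a \(C^2\) function off the critical set. There the differential inequality holds classically up to a remainder \(r_\eta\to0\). After integrating by parts, only the locally uniform convergence \(\nabla w_\eta\to\nabla w\) is needed.

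Within your own framework there is an equally short repair. Put \(C=\{\nabla v_t=0\}\) and take a compact \(Z\Subset K_t\setminus C\). By compactness of minimizing pairs and continuity of \(\nabla v_t\), your touching functions \(\varphi\) have uniformly bounded Hessians on \(Z\), so \(v_t\) is semiconcave there. Being also convex, \(v_t\) is \(C^{1,1}\) on \(Z\). Your inequality then holds almost everywhere, \(a(\nabla u_t)\) is Lipschitz, and integration by parts gives the weak inequality for test functions supported off \(C\).

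To include \(C\), use a \emph{spatial} cutoff, as the paper does. \(C\) is the minimum set of a convex function that blows up at \(\partial K_t\), hence compact and convex. So a cutoff \(\chi_\delta\) vanishing within distance \(\delta\) of \(C\) and equal to one beyond \(2\delta\) has \(\int|\nabla\chi_\delta|=O(1)\). The error is then a constant times \(\sup_{\operatorname{dist}(z,C)\le2\delta}|a(\nabla u_t(z))|\), which tends to \(0\) using only continuity of \(\nabla u_t\) and \(a(0)=0\).

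Finally, Lipschitz bounds on compact subsets do not give \(\nabla u_t\in L^p(K_t)\), and your truncation step \((u_t-\epsilon)_+\to u_t\) in \(W^{1,p}\) needs this. The paper obtains a global bound from \(|\nabla u_t(x)|=|\nabla u_0(x_0)|^{1-t}|\nabla u_1(x_1)|^t\) and boundary regularity on smooth domains. Alternatively, you can test the weak inequality with \((u_t-\epsilon)_+\) and apply monotone convergence.
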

\begin{proof}
The cases \(t=0,1\) are immediate. In dimension one, \(F(\xi)=c|\xi|\)
and the domains are intervals; scaling makes their eigenvalues a positive
constant times the interval length to the power \(-p\), which is convex.
Assume henceforth that \(d\ge2\) and \(0<t<1\). We first take smooth
convex input domains and write
\(K_t=(1-t)K_0+tK_1\) and \(\lambda_i=\lambda_{p,F}(K_i)\).

\emph{Eigenfunction inputs.}
Set \(a(0)=0\) and, for \(\xi\ne0\),
\[
 \begin{aligned}
 a(\xi)&=F(\xi)^{p-1}\nabla F(\xi),\\
 A(\xi)&=Da(\xi)=\frac1pD^2(F^p)(\xi)\\
 &=F(\xi)^{p-2}\bigl((p-1)\nabla F(\xi)\otimes\nabla F(\xi)
                         +F(\xi)D^2F(\xi)\bigr).
 \end{aligned}
\]
Comparison with \(D^2(F^2/2)\) and compactness of the unit sphere give
constants \(c_0,c_1>0\) such that
\begin{equation}\label{eq:ellipticity}
 c_0|\xi|^{p-2}|\zeta|^2
 \le\zeta^{\mathsf T}A(\xi)\zeta
 \le c_1|\xi|^{p-2}|\zeta|^2
 \qquad(\xi\ne0).
\end{equation}
Also \(F^p\in C^1(\R^d)\), since its gradient is
\(O(|\xi|^{p-1})\) at zero.

The direct method gives a nonnegative, nonzero first eigenfunction
\(u_i\in W_0^{1,p}(K_i)\). It minimizes
\[
 w\longmapsto\frac1p\int_{K_i}F(\nabla w)^p
                  -\frac{\lambda_i}{p}\int_{K_i}w^p
 \qquad(w\ge0),
\]
whose minimum is zero by the definition of \(\lambda_i\).
In Theorem~1.1 of~\cite{MRS24}, take the kinetic integrand to be
\(F^p\), the reaction to be \(f(s)=\lambda_i s^{p-1}\), and its
primitive to be \(P(s)=\lambda_i s^p/p\). Then
\(P^{1/p}\) is linear, \(P/f=s/p\) is convex, and the kinetic
integrand is strictly convex. The transformation in that theorem is a
positive multiple of \(\log s\). Thus \(u_i\) is log-concave;
this specialization is also stated explicitly on p.~3674 of~\cite{MRS24}.
The bounds~\eqref{eq:ellipticity} are condition~(33) of that paper.
With the extension \(f(s)=\lambda_i|s|^{p-1}\), Proposition~4.5
of~\cite{MRS24} gives positivity and \(C^{1,\alpha}\) regularity up
to the smooth boundary for this nontrivial minimizer. In particular,
\(u_i\) has bounded gradient and zero boundary value. Where
\(\nabla u_i\ne0\), the equation is locally uniformly elliptic with
smooth coefficients, so local elliptic regularity gives \(u_i\in C^2\).

Consequently \(v_i=-\log u_i\) is convex and \(C^1\) in \(K_i\),
is bounded below, and tends to infinity at the boundary. Away from
\(\{\nabla v_i=0\}\), it is \(C^2\) and satisfies
\begin{equation}\label{eq:log-eigenfunction}
 \operatorname{div}a(\nabla v_i)
 =\lambda_i+(p-1)F(\nabla v_i)^p.
\end{equation}
This identity follows by differentiating \(u_i=e^{-v_i}\) in the
weak eigenfunction equation, which is classical on this region.

\emph{Infimal convolution and its gradients.}
For \(z\in K_t\) and \(\eta\ge0\), define
\[
 w_\eta(z)=\inf_{\substack{x_i\in K_i\\(1-t)x_0+tx_1=z}}
 \bigl((1-t)(v_0(x_0)+\eta|x_0|^2)
                  +t(v_1(x_1)+\eta|x_1|^2)\bigr),
\]
and put \(w=w_0\). The infima are attained at interior pairs, because
the \(v_i\) are bounded below and blow up on their boundaries. When
\(\eta>0\), the pair is unique by strict convexity. Since the input
domains are bounded, \(0\le w_\eta-w\le C\eta\).
For every compact \(Z\subset K_t\), all minimizing pairs with
\(z\in Z\) and \(0\le\eta\le1\) lie in fixed compact subsets of
the input domains: an interior feasible pair remains feasible near
each \(z\) after shifting one component, and a finite cover bounds
the objectives above. The lower bounds on the other component then
bound each \(v_i(x_i)\) above.

Stationarity gives a common gradient
\(q_\eta=\nabla v_i(x_i)+2\eta x_i\). Convexity gives the supporting
lower bound for \(w_\eta(z+h)\), while shifting \(x_0\) by
\(h/(1-t)\) gives the matching first-order upper bound. Hence
\(w_\eta\) is differentiable and \(\nabla w_\eta(z)=q_\eta\),
also at \(\eta=0\), even if the minimizing pair is not unique.
The compactness of minimizing pairs and continuity of \(\nabla v_i\)
now show that \(w_\eta\in C^1(K_t)\) and
\[
 \nabla w_\eta\longrightarrow\nabla w
 \quad\text{uniformly on compact subsets of }K_t.
\]
Indeed, a convergent sequence of output points and perturbation
parameters has a subsequence of minimizing pairs converging to a
minimizer for the limiting problem; its common gradient is uniquely
determined by the output point.

As \(z\) approaches \(\partial K_t\), every limiting decomposition
has a component on its input boundary. Thus \(w(z)\to\infty\).
The critical set \(C=\{\nabla w=0\}\) is therefore a nonempty
compact convex subset of \(K_t\). For \(Z\Subset K_t\setminus C\),
the common gradients stay bounded away from zero. Since
\(\nabla v_i(x_i)=q_\eta-2\eta x_i\), the original gradients also
stay away from zero for small \(\eta\). The corresponding Hessians
\(D^2v_i(x_i)\) are then uniformly bounded on these compact sets.

\emph{The differential inequality away from the critical set.}
At a perturbed minimizing pair, put \(B_i=D^2v_i(x_i)+2\eta I\).
On the compact sets just described, these matrices are positive
definite, and implicit differentiation gives
\[
 D^2w_\eta=\bigl((1-t)B_0^{-1}+tB_1^{-1}\bigr)^{-1}
 \le(1-t)B_0+tB_1.
\]
The inequality is the arithmetic--harmonic inequality for positive
matrices. Taking its trace against \(A(q_\eta)>0\) and
using~\eqref{eq:log-eigenfunction} gives, with
\(\Lambda=(1-t)\lambda_0+t\lambda_1\),
\[
 \operatorname{div}a(\nabla w_\eta)
 \le\Lambda+(p-1)F(\nabla w_\eta)^p+r_\eta,
 \qquad \sup_Z|r_\eta|\longrightarrow0.
\]
To justify the remainder, each original gradient differs from
\(q_\eta\) by \(O(\eta)\), the Hessians are bounded, and \(A\)
is uniformly continuous on the compact annulus containing these
gradients. The added Hessian contributes \(2\eta\operatorname{tr}A\).
The matrix \(A\) has degree \(p-2\); only its bounds on this annulus
are used, also when \(1<p<2\).

Put \(U=e^{-w}\) and \(U_\eta=e^{-w_\eta}\). Evenness and
homogeneity give, at noncritical points,
\[
 \operatorname{div}a(\nabla e^{-v})
 =e^{-(p-1)v}\bigl((p-1)F(\nabla v)^p
                       -\operatorname{div}a(\nabla v)\bigr).
\]
Thus \(\operatorname{div}a(\nabla U_\eta)
\ge-(\Lambda+r_\eta)U_\eta^{p-1}\) locally off \(C\).
Testing and passing to the limit using compact-uniform convergence
of the functions and their first derivatives yields
\begin{equation}\label{eq:weak-off-critical}
 \int_{K_t}a(\nabla U)\cdot\nabla\psi
 \le\Lambda\int_{K_t}U^{p-1}\psi
 \qquad(0\le\psi\in C_c^\infty(K_t\setminus C)).
\end{equation}
No convergence of second derivatives is needed.

\emph{Including the critical set and the boundary.}
The flux \(a(\nabla U)\) is continuous and zero on \(C\).
Choose a Lipschitz cutoff \(\chi_\delta\) which is zero within
distance \(\delta\) of \(C\), one beyond distance \(2\delta\), and
has gradient bounded by \(1/\delta\). Convexity of \(C\) gives
\(\int|\nabla\chi_\delta|\le C_2\), uniformly for small \(\delta\):
the \(2\delta\)-parallel body has volume increment \(O(\delta)\).
This holds whether or not \(C\) has interior. For
\(0\le\phi\in C_c^\infty(K_t)\), insert
\(\psi=\phi\chi_\delta\) in~\eqref{eq:weak-off-critical}, using
smooth approximation away from \(C\). The extra term has magnitude
at most
\[
 C_2\|\phi\|_\infty
 \sup_{\operatorname{dist}(z,C)\le2\delta}|a(\nabla U(z))|
 \longrightarrow0.
\]
Dominated convergence therefore gives
\begin{equation}\label{eq:weak-global}
 \int_{K_t}a(\nabla U)\cdot\nabla\phi
 \le\Lambda\int_{K_t\setminus C}U^{p-1}\phi
 \le\Lambda\int_{K_t}U^{p-1}\phi.
\end{equation}
The first integral includes \(C\) because its flux is zero. A
positive-volume critical set causes no loss, since the right-hand
integrand is nonnegative.

At any minimizing pair the common logarithmic gradient gives
\[
 U(z)=u_0(x_0)^{1-t}u_1(x_1)^t,
 \qquad
 |\nabla U(z)|=|\nabla u_0(x_0)|^{1-t}|\nabla u_1(x_1)|^t.
\]
The bounded gradients of \(u_i\) therefore make \(U\) Lipschitz on
the convex set \(K_t\). If output points tend to its boundary, a
subsequence of minimizing pairs has at least one component on its
input boundary, so the product formula gives \(U\to0\).
Consequently \((U-s)_+\) has compact support in \(K_t\) for every
\(s>0\), and these truncations converge to \(U\) in \(W^{1,p}\)
as \(s\downarrow0\). Mollification proves
\(U\in W_0^{1,p}(K_t)\) and provides nonnegative smooth
approximants. Testing~\eqref{eq:weak-global} with \(U\) now gives
\[
 \int_{K_t}F(\nabla U)^p\le\Lambda\int_{K_t}U^p.
\]
Since \(U>0\), its Rayleigh quotient proves the desired inequality
for smooth input domains.

\emph{Arbitrary convex domains.}
Every bounded open convex domain has nested smooth strongly convex
inner exhaustions. For completeness, translate an interior point to
zero and let \(\gamma\) be its Minkowski functional. Convolution
with a centered nonnegative smooth kernel gives a smooth convex
function at least \(\gamma\), converging uniformly on bounded sets.
Adding \(\delta|x|^2\) makes it strongly convex. Suitable regular
sublevels below one lie compactly inside the domain and contain any
prescribed compact subset. Successively including the preceding
closure gives the required nested exhaustion.

Apply this to both inputs, obtaining \(K_{i,j}\uparrow K_i\).
Their Minkowski combinations increase to \(K_t\). For every open
exhaustion \(L_j\uparrow L\), domain monotonicity and compactly
supported smooth Rayleigh tests give
\(\lambda_{p,F}(L_j)\downarrow\lambda_{p,F}(L)\).
Passing to this limit proves the assertion.
\end{proof}

All regularity and perturbation constants in this proof may depend on
the fixed norm, \(p\), the dimension, and the auxiliary domains.
Lemma~\ref{lem:minkowski} subsequently takes \(p\downarrow1\) and
\(\epsilon\downarrow0\) using scalar infima and fixed test densities;
it requires no uniform eigenfunction estimates in either limit.

\bibliographystyle{alpha}
\bibliography{komlos-variation}
\end{document}